\documentclass[10pt,fleqn]{article}
\usepackage[margin=0.8in]{geometry}
\usepackage{amsmath}
\usepackage{amssymb, amsthm, bm, upgreek}
\usepackage[hidelinks,bookmarksnumbered=true]{hyperref}
\usepackage{booktabs}
\usepackage{algorithm}
\usepackage{algpseudocode}
\usepackage{graphicx}
\usepackage{subcaption}
\usepackage{appendix}
\usepackage[numbers,sort&compress]{natbib}
\usepackage{IEEEtrantools}
\usepackage{authblk}

\newtheorem{theorem}{Theorem}
\newtheorem{prop}[theorem]{Proposition}
\newtheorem{assumption}{Assumption}
\newtheorem{remark}{Remark}

\newcommand{\vecf}{\bm}
\newcommand{\vect}{\mathbf}
\newcommand{\vectg}{\bm}
\newcommand{\tensor}[1]{\bm{\mathsf{#1}}}

\begin{document}
\bstctlcite{IEEEExample:BSTcontrol} 
\title{Matrix-Free Moment-Matching Method for Reduced-Order Modeling of  Quadratic-Bilinear Descriptor Systems with Multiple Inputs}
\author[1]{Zeyuan Gao}
\author[1]{Cheol W. Lee\thanks{Corresponding author:
\href{mailto:cheol@umich.edu}{cheol@umich.edu}}}
\author[2]{Oleg Zikanov}

\affil[1]{\small Department of Industrial and Manufacturing Systems Engineering,
University of Michigan-Dearborn, Dearborn, MI 48128, USA}
\affil[2]{Department of Mechanical Engineering,
University of Michigan-Dearborn, Dearborn, MI 48128, USA}
\date{}
\maketitle
\begin{abstract}
This paper develops a new moment-matching order reduction method for multi-input quadratic-bilinear descriptor systems. The proposed approach accommodates multiple inputs acting on both the differential and algebraic equations and constructs separate projection spaces associated with different input channels and input combinations. The method uses a matrix-free algorithm, allowing the reduced-order matrices and tensors to be computed without explicitly assembling or storing the full-order system matrices and tensors. This feature makes the proposed method particularly suitable for large-scale computational fluid dynamics problems. Numerical experiments carried out for two-dimensional flow problems demonstrate that the resulting reduced-order models accurately reproduce the transient responses of the full-order models under multiple time-varying inputs.
\end{abstract}

\section{Introduction}

In many applications, the same physical system must be simulated repeatedly under different input conditions, such as in design, optimization, control, and uncertainty quantification \cite{B2015}. The dynamic systems that describe these physical systems, either directly or through spatial discretization of partial differential equations, can be large-scale, making high-fidelity simulations computationally expensive, particularly when a large number of simulations are required. Reduced-order models (ROMs) address this issue by replacing the original high-dimensional full order model (FOM) with lower-dimensional models that are faster to simulate yet accurately represent the original solution behavior \cite{H2022}. Many FOMs can be described as Quadratic-bilinear (QB) systems. They may be obtained as second-order Taylor approximations about an operating point \cite{P2003}, as exact lifted representations of certain nonlinear systems \cite{A2017i}, or directly as a QB systems \cite{B2018}.

The general form of QB systems can be written as:
\begin{gather}
\vect E\dot{\vect x}
=\vect A\vect x+\vect B\vect u
+\tensor H\left(\vect x,\vect x\right)
+\tensor N\left(\vect x\right)\vect u
\end{gather}
where $\vect x\in\mathbb{R}^{n}$, $\vect u\in\mathbb{R}^{m}$, $\vect E,\vect A\in\mathbb{R}^{n\times n}$, and $\vect B\in\mathbb{R}^{n\times m}$. The quadratic and bilinear operators are defined as $\tensor H:\mathbb{R}^{n}\times\mathbb{R}^{n}\rightarrow\mathbb{R}^{n}$ and $\tensor N:\mathbb{R}^{n}\rightarrow\mathbb{R}^{n\times m}$, respectively.
The matrix $\vect E$ may be singular, and under an appropriate nonsingular equivalence transformation, $\vect E$ can be brought into the block form $\vect E=\operatorname{diag}(\vect I_{11},\vect 0)$. Accordingly, the state vector is partitioned as $\vect x=[\vect x_1^T,\vect x_2^T]^T$, where $\vect x_1$ and $\vect x_2$ denote the differential and algebraic variables, respectively. Depending on the presence of algebraic variables, QB systems can be categorized as ordinary differential equation (ODE) systems or QB descriptor systems, i.e., differential-algebraic equation (DAE) systems. For many descriptor systems from practical applications \cite{P2003,A2017i,B2018,A2017,A2025}, the algebraic equations are linear and the quadratic terms involving purely algebraic variables vanish. Under these conditions, the QB descriptor systems can be transformed into the following form:
\begin{subequations}\label{QBeqn}
\begin{gather}\label{QBeq}
\dot{\vect x}_1
=\vect A_{11}\vect x_1+\vect A_{12}\vect x_2
+\vect B_1\vect u
+\tensor H_{11}\left(\vect x_1,\vect x_1\right)
+\tensor H_{12}\left(\vect x_1,\vect x_2\right)
+\tensor N_1\left(\vect x_1\right)\vect u
+\tensor N_2\left(\vect x_2\right)\vect u\\
\vect 0
=\vect A_{21}\vect x_1+\vect A_{22}\vect x_2+\vect B_2\vect u
\end{gather}
\end{subequations}

There are generally two classes of ROM for QB systems model reduction: trajectory-based methods and moment-matching methods. Trajectory-based methods construct ROMs from representative solution trajectories generated by FOMs. Common approaches includes proper orthogonal decomposition (POD) \cite{K2005}, POD combined with the discrete empirical interpolation method \cite{C2010}, reduced basis methods \cite{P2001}, and trajectory piecewise linear methods \cite{R2003}. Although trajectory-based methods provide an optimal approximation of the training snapshots and have good performance in many applications, they do not explicitly preserve the local input-output behavior of the original system \cite{K2026}. 

In contrast, moment-matching methods construct ROMs by matching the moments of the Volterra transfer functions about selected interpolation points \cite{B2006}. For ODE systems where $\vect E$ is nonsingular, this method constructs the projection spaces $\mathcal W$ and $\mathcal V$, with $\vect x$ approximated as $\vect x\approx\vect V\vect x_r$, such that the resulting reduced-order system 
\begin{gather}\label{ROMtarget}
\vect W^T \vect E\vect V \dot{\vect x}_r
=\vect W^T\left( \vect A\vect V \vect x_r+\vect B\vect u
+\tensor H\left(\vect V \vect x_r,\vect V \vect x_r\right)
+\tensor N\left(\vect V \vect x_r\right)\vect u\right)
\end{gather}
matches the transfer function of the FOM at the selected interpolation points. In Equation~(\ref{ROMtarget}),  $\vect W$ and $\vect V$ are orthonormal bases of $\mathcal W$ and $\mathcal V$, respectively. The method is referred as a one-sided projection when $\mathcal W=\mathcal V$, and as a two-sided projection when $\mathcal W\neq\mathcal V$. For DAE systems, where $\vect E$ is singular, a common approach is to first transform the system into an equivalent ODE system and then construct the ROM, since direct reduction of the DAE system may lead to unbounded approximation errors \cite{G2013}.
The resulting ROMs can accurately capture the input-output behavior of the FOM around the operating point over the dynamic range represented by the selected interpolation points.
Moreover, for nonlinear systems, POD methods may lose accuracy when varying inputs produce responses that are not adequately represented by the training data \cite{S2017}, whereas moment-matching methods do not rely on specific training trajectories and can therefore offer an advantage in such cases. Existing moment-matching ROMs for QB systems include Krylov-subspace-based moment-matching methods \cite{A2016,A2021}, $\mathcal H_2$-optimal projection methods \cite{B2018}, balanced truncation methods \cite{B2024}, and data-driven approaches \cite{G2018}.

In this paper, we consider a class of QB descriptor systems of the form in Equation~(\ref{QBeqn}), with multiple inputs, each of which may act on both the differential and algebraic equations. Our method is applicable to a broad class of descriptor systems, including computational fluid dynamics (CFD) systems \cite{A2017}, nonlinear RC circuits \cite{A2017i}, and constrained mechanical systems \cite{A2025}. 
A closely related work is that of Ilyas Ahmad et al. \cite{A2017}, who developed a ROM for single-input QB descriptor systems that allows the input to act on both the differential and algebraic equations. However, an input applied to the algebraic equations naturally induces an additional source term when the descriptor system is transformed into an equivalent ODE system, resulting in a multi-input QB system, as described in detail in Section~\ref{Conv}. Consequently, their ROM method is applicable to a system with an input in the algebraic equation only if the input remains constant over time.
Ilyas Ahmad et al. \cite{A2016} constructed a single projection space for multi-input QB systems by combining the Krylov subspaces \cite{G1997} associated with all first- and second-order transfer functions \cite{G2011}, including the mixed-input second-order terms, and used this space to construct a ROM. With this construction, the components associated with different input channels or input combinations may influence the projection spaces associated with the individual transfer functions. Such coupling is not required by the moment-matching conditions, which are imposed separately for each transfer function. Moreover, to the best of our knowledge, no numerical example has been reported demonstrating the application of this construction to a multi-input QB descriptor system.
The proposed method constructs the Krylov subspaces associated with the first- and second-order transfer functions of each input independently, while separate Krylov subspaces are constructed for the mixed-input second-order transfer functions. This construction avoids unnecessary coupling between the projection spaces associated with different transfer functions and enables the practical application of moment matching to multi-input QB descriptor systems, as demonstrated by the numerical examples in this work.

Inspired by existing matrix-free ROM approaches for linear systems \cite{X2022, X2023, X2023_2, L2025}, a completely matrix-free algorithm is developed for QB descriptor systems. The Krylov vectors are obtained by solving the linearized full-order system, while the reduced-order quantities are evaluated directly using the full-order operators, thereby avoiding the explicit construction and storage of the full-order matrices and tensors.
This allows the proposed method to be extended to large-scale simulations whose matrices and tensors are computationally difficult to calculate and store. 
For example, in CFD systems, the velocity field corresponds to the differential variables, while the pressure field and boundary values can be treated as algebraic variables. After spatial discretization, the state vector $x$ consists of the field variables defined at the computational cells and boundary faces, resulting in system matrices of dimension $\mathbb{R}^{n \times n}$, where $n$ denotes the total number of degrees of freedom. More importantly, representing the quadratic terms explicitly requires a third-order $\tensor H\in\mathbb{R}^{n\times n\times n}$, making its direct construction and storage impractical for large-scale CFD systems. The matrix-free formulation avoids the explicit construction and storage of these large-scale operators and therefore makes the ROM in this study more suitable for large-scale CFD applications.

The remainder of this paper is organized as follows. Section \ref{Conv} converts the QB descriptor systems into equivalent ODE systems and Section \ref{ROMDev} develops the ROM for the resulting ODE systems. Section \ref{CompIssue} presents a completely matrix-free procedure for ROM construction without explicitly constructing the system matrices or tensors. In Section \ref{NumEx} we provide results from some CFD tests to demonstrate the accuracy and effectiveness of the proposed method.

\section{Model Development}\label{MD}

\subsection{Conversion of DAEs to ODE systems}\label{Conv}
As shown in Appendix \ref{DS}, the term involving $\vect A_{22}$ in Equation (\ref{QBeqn}) can be eliminated from the algebraic equation through a variable transformation and elimination. For notational simplicity, the resulting DAEs used for the ROM construction are written as:
\begin{subequations}\label{DAE}
\begin{gather}
 \dot{\vect x}_1=\vect A_{11}\vect x_1+\vect A_{12}\vect x_2
  +\vect B_1\vect u
  +\tensor H_{11}\left(\vect x_1, \vect x_1\right)
  +\tensor H_{12}\left(\vect x_1, \vect x_2\right)
  +\tensor N_1\left(\vect x_1\right)\vect u
  +\tensor N_2\left(\vect x_2\right)\vect u\label{DE}
  +\tensor D_1\left(\vect u, \vect u\right)\\
 \vect 0=\vect A_{21}\vect x_1+\vect B_2\vect u \label{AE}
\end{gather}
\end{subequations}

For CFD problems, at the discrete level, the transformation described in Appendix \ref{DS} eliminates the boundary values by incorporating their contributions as equivalent source terms in the cells adjacent to the boundaries. Specifically, the boundary-condition equations are substituted into the discretized governing equations for the adjacent cells, so that the boundary values no longer appear explicitly as algebraic variables. Their effects on the solution are instead preserved through the resulting source terms.

Following the variable transformation proposed in \cite{H2008,A2017}, we make the following change of variable to cancel the $\vect {u}$ term in Equation (\ref{AE}):
\begin{gather}
 \vect y_1=\vect x_1+\vect \Gamma_1 \vect u, \quad 
 \vect y_2=\vect x_2, \quad 
 \vect \Gamma_1 
  =\vect A_{12}\left(\vect A_{21}\vect A_{12}\right)^{-1}\vect B_2
\end{gather}
This requires the assumption of non-singularity of $\vect A_{21}\vect A_{12}$:
\begin{assumption}\label{as2}
$\vect A_{21}\vect A_{12}$ is non-singular.
\end{assumption}
The Equation (\ref{DAE}) becomes:
\begin{subequations}
\begin{gather}
\begin{split}\label{DE1}
 \dot{\vect y}_1-\vect \Gamma_1 \dot{\vect u}=
  \vect A_{11}\vect y_1-\vect A_{11}\vect \Gamma_1 \vect u
  &+\vect A_{12}\vect y_2+\vect B_1\vect u\\&+
  \tensor H_{11}\left(\vect y_1, \vect y_1\right)
  +\tensor H_{12}\left(\vect y_1, \vect y_2\right)
  +\tensor N'_1\left(\vect y_1\right)\vect u
  +\tensor N'_2\left(\vect y_2\right)\vect u
  +\tensor D'_1\left(\vect u, \vect u\right)
\end{split}\\
 \vect 0=\vect A_{21}\vect y_1\label{AE1}
\end{gather}
\end{subequations}
where,
\begin{subequations}\label{qterm1}
\begin{gather}
 \tensor N'_1\left(\vect y_1\right)\vect u
  =-\tensor H_{11}\left(\vect \Gamma_1 \vect u, \vect y_1\right)
  -\tensor H_{11}\left(\vect y_1, \vect \Gamma_1 \vect u\right)
  +\tensor N_1\left(\vect y_1\right)\vect u \label{qterm11},\\
 \tensor N'_2\left(\vect y_2\right)\vect u=
  -\tensor H_{12}\left(\vect \Gamma_1 \vect u, \vect y_2\right)
  +\tensor N_2\left(\vect y_2\right)\vect u\label{qterm12},\\
 \tensor D'_1\left(\vect u, \vect u\right)
  =\tensor H_{11}\left(
  \vect \Gamma_1 \vect u,\vect \Gamma_1 \vect u\right)
  -\tensor N_1\left(\vect \Gamma_1 \vect u\right)\vect u 
  +\tensor D_1\left(\vect u, \vect u\right)\label{qterm13}
\end{gather}
\end{subequations}

Taking time derivative for $\vect A_{21}\vect y_1$, which vanishes because Equation (\ref{AE1}) is an algebraic constraint that must be satisfied at all times, gives:
\begin{subequations}
\begin{gather}
\begin{split}\label{Cancely2_1}
    \vect 0=\dot{\vect A_{21}\vect y_1}=\vect A_{21}\vect A_{12}\vect y_2
  &+\vect A_{21}\bigl(\vect A_{11}\vect y_1
  -\vect A_{11}\vect \Gamma_1 \vect u+\vect B_1\vect u \\
 &+\tensor H_{11}\left(\vect y_1, \vect y_1\right)
  +\tensor H_{12}\left(\vect y_1, \vect y_2\right)
  +\tensor N'_1\left(\vect y_1\right)\vect u
  +\tensor N'_2\left(\vect y_2\right)\vect u
  +\tensor D'_1\left(\vect u, \vect u\right)\bigr)
  +\vect A_{21}\vect \Gamma_1 \dot{\vect u}
\end{split}\\
\begin{split}\label{Cancely2_2}
 \vect y_2=-\left(\vect A_{21}\vect A_{12}\right)^{-1}&\vect A_{21}
  \bigl(\vect A_{11}\vect y_1-\vect A_{11}\vect \Gamma_1 \vect u
  +\vect B_1\vect u \\
+\tensor H_{11}&\left(\vect y_1, \vect y_1\right)
  +\tensor H_{12}\left(\vect y_1, \vect y_2\right)
  +\tensor N'_1\left(\vect y_1\right)\vect u
  +\tensor N'_2\left(\vect y_2\right)\vect u
  +\tensor D'_1\left(\vect u, \vect u\right)\bigr)
  -\left(\vect A_{21}\vect A_{12}\right)^{-1}\vect B_2\dot{\vect u}
\end{split} 
\end{gather}
\end{subequations}
Following the projection-based elimination procedure proposed by Heinkenschloss et al. \cite{H2008}, substituting Equation (\ref{Cancely2_2}) into $\vect A_{12}\vect y_2$ term in Equation (\ref{DE1}), we obtain:
\begin{gather}
 \dot{\vect y}_1=\vect \Pi \left(\vect A_{11}\vect y_1
  -\vect A_{11}\vect \Gamma_1 \vect u+\vect B_1\vect u
  +\tensor H_{11}\left(\vect y_1, \vect y_1\right)
  +\tensor H_{12}\left(\vect y_1, \vect y_2\right)
  +\tensor N'_1\left(\vect y_1\right)\vect u
  +\tensor N'_2\left(\vect y_2\right)\vect u
  +\tensor D'_1\left(\vect u, \vect u\right)\right) \label{DE2}
\end{gather}
where:
\begin{gather}
 \vect \Pi =\vect I_{11} 
  -\vect A_{12}\left(\vect A_{21}\vect A_{12}\right)^{-1}\vect A_{21}
\end{gather}
Substitute $\vect y_2$ in $\tensor H_{12}(\vect y_1, \vect y_2)$ and $\tensor N'_2(\vect y_2)\vect u$,
\begin{subequations}
\begin{gather}
 \tensor H_{12}\left(\vect y_1, \vect y_2\right)=
  -\tensor H_{12}\left(\vect y_1,\left(\vect A_{21}\vect A_{12}\right)^{-1}\vect A_{21}\vect A_{11}\vect y_1\right)
  -\tensor H_{12}\left(\vect y_1,\vectg \Gamma_2 \vect u\right)
  -\tensor H_{12}\left(\vect y_1,\vect \Gamma_3 \dot{\vect u}\right)+\mathcal O\left(\vect r^3\right),\\
 \tensor N'_{2}\left(\vect y_2\right)\vect u=
  -\tensor N'_{2}\left(\left(\vect A_{21}\vect A_{12}\right)^{-1}\vect A_{21}
  \vect A_{11}\vect y_1\right)\vect u
  -\tensor N'_{2}\left(\vectg \Gamma_2 \vect u\right)\vect u
  -\tensor N'_{2}\left(\vect \Gamma_3 \dot{\vect u}\right)\vect u+\mathcal O\left(\vect r^3\right)
\end{gather}
\end{subequations}
where,
%\begin{subequations}
\begin{gather}
 	\vect \Gamma_2=\left(\vect A_{21}\vect A_{12}\right)^{-1}\vect A_{21}\left(-\vect A_{11}\vect \Gamma_1 +\vect B_1\right),\quad
 	\vect \Gamma_3=\left(\vect A_{21}\vect A_{12}\right)^{-1}\vect B_2
\end{gather}
%\end{subequations}

Let,
\begin{subequations}\label{qterm2}
\begin{gather}
\tensor H\left(\vect y_1,\vect y_1\right)
  =\tensor H_{11}\left(\vect y_1,\vect y_1\right)
  -\tensor H_{12}\left(\vect y_1,\left(\vect A_{21}\vect A_{12}\right)^{-1}\vect A_{21}\vect A_{11}\vect y_1\right),\label{qterm21}\\
\tensor N\left(\vect y_1\right) \vect u
  =\tensor N'_1\left(\vect y_1\right) \vect u
  -\tensor H_{12}\left(\vect y_1,\vectg \Gamma_2 \vect u\right)
  -\tensor N'_{2}\left(\left(\vect A_{21}\vect A_{12}\right)^{-1}\vect A_{21}
  \vect A_{11}\vect y_1\right)\vect u,\quad
 \tensor M\left(\vect y_1\right) \dot{\vect u}
  =-\tensor H_{12}\left(\vect y_1,
  \vect \Gamma_3 \dot{\vect u}\right)\label{qterm23},\\
\tensor D\left(\vect u,\vect u\right)
  =\tensor D'_1\left(\vect u, \vect u\right)
  -\tensor N'_{2}\left(\vectg \Gamma_2 \vect u\right)\vect u,\quad
\tensor K\left(\vect u,\dot{\vect u}\right)
  =-\tensor N'_{2}\left(\vect \Gamma_3 \dot{\vect u}\right)\vect u\label{qterm24}
\end{gather}
\end{subequations}
Equation (\ref{DE2}) becomes the following, where $\mathcal O(\vect r^3)$ collects terms of total degree three and higher in $\vect y_1$, $\vect u$, and $\dot{\vect u}$:
\begin{gather}
 \dot{\vect y}_1
  =\vect \Pi \left(\vect A_{11}\vect y_1-\vect A_{11}\vect \Gamma_1 \vect u
  +\vect B_1\vect u+\tensor H\left(\vect y_1, \vect y_1\right)
  +\tensor N\left(\vect y_1\right) \vect u
  +\tensor D\left(\vect u, \vect u\right)
  +\tensor M\left(\vect y_1\right)\dot{\vect u}
  +\tensor K\left(\vect u, \dot{\vect u}\right)
  +\mathcal O\left(\vect r^3\right)\right) \label{DE3}
\end{gather}
The $\mathcal O(\vect r^3)$ term can be neglected in the ROM construction because it does not contribute to the first- and second-order transfer functions considered in this study, as will be shown in Section \ref{ROMDev}.

\begin{prop}
$\operatorname{Im}(\vect \Pi)= \operatorname{ker}(\vect A_{21})$, and $\vect \Pi^2=\vect \Pi$. Moreover, there exists a symmetric positive-definite matrix $\vect G$ such that the operator $\vect \Pi$ is self-adjoint with respect to the inner product $\left\langle \vect x,\vect y\right\rangle_{\vect G}:=\vect x^T\vect G\vect y$. Equivalently, $\vect \Pi^T\vect G=\vect G\vect \Pi$.
\end{prop}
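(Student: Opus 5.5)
The plan is to work directly from the definition $\vect \Pi=\vect I_{11}-\vect A_{12}(\vect A_{21}\vect A_{12})^{-1}\vect A_{21}$, using only Assumption~\ref{as2}. Let $\vect A_{21}\in\mathbb{R}^{p\times n_1}$ and $\vect A_{12}\in\mathbb{R}^{n_1\times p}$, where $n_1$ is the number of differential variables. Since $\vect A_{21}\vect A_{12}$ is a nonsingular $p\times p$ matrix, $\vect A_{21}$ has full row rank $p$ and $\vect A_{12}$ has full column rank $p$. This rank fact is the only structural input I need.

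\textbf{Step 1 (image and idempotence).} First, $\vect A_{21}\vect \Pi=\vect A_{21}-\vect A_{21}\vect A_{12}(\vect A_{21}\vect A_{12})^{-1}\vect A_{21}=\vect 0$, so $\operatorname{Im}(\vect\Pi)\subseteq\ker(\vect A_{21})$. Conversely, if $\vect A_{21}\vect x=\vect 0$, the definition gives $\vect\Pi\vect x=\vect x$, so $\ker(\vect A_{21})\subseteq\operatorname{Im}(\vect\Pi)$. Idempotence then follows immediately: for any $\vect x$, $\vect\Pi\vect x\in\ker(\vect A_{21})$, and $\vect\Pi$ fixes that subspace, so $\vect\Pi^2\vect x=\vect\Pi\vect x$. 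One could also expand the product directly. For later use I also record $\vect\Pi\vect A_{12}=\vect 0$, so $\operatorname{Im}(\vect A_{12})\subseteq\ker(\vect\Pi)$.

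\textbf{Step 2 (direct-sum decomposition).} I will show $\mathbb{R}^{n_1}=\ker(\vect A_{21})\oplus\operatorname{Im}(\vect A_{12})$. The dimensions are $n_1-p$ and $p$, which add up to $n_1$. The intersection is trivial: if $\vect x=\vect A_{12}\vect z$ and $\vect A_{21}\vect x=\vect 0$, then $\vect A_{21}\vect A_{12}\vect z=\vect 0$, which forces $\vect z=\vect 0$. Now choose a basis matrix $\vect Z\in\mathbb{R}^{n_1\times(n_1-p)}$ of $\ker(\vect A_{21})$ and set $\vect T=[\vect Z,\ \vect A_{12}]$, which is nonsingular. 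By Step 1, $\vect\Pi\vect Z=\vect Z$ and $\vect\Pi\vect A_{12}=\vect 0$. Hence $\vect T^{-1}\vect\Pi\vect T=\operatorname{diag}(\vect I,\vect 0)=:\vect P$. In words, $\vect\Pi$ is the oblique projector onto $\ker(\vect A_{21})$ along $\operatorname{Im}(\vect A_{12})$.

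\textbf{Step 3 (the metric).} The key step is producing $\vect G$; once the decomposition is in hand it is short. Define $\vect G=\vect T^{-T}\vect T^{-1}$, which is symmetric positive definite because $\vect T$ is nonsingular. Then
$\vect G\vect\Pi=\vect T^{-T}\vect T^{-1}\vect T\vect P\vect T^{-1}=\vect T^{-T}\vect P\vect T^{-1}$. This matrix is symmetric because $\vect P$ is. Therefore $\vect G\vect\Pi=(\vect G\vect\Pi)^T=\vect\Pi^T\vect G$, which is exactly self-adjointness with respect to $\langle\cdot,\cdot\rangle_{\vect G}$.

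The only real obstacle is recognizing that an oblique projector becomes orthogonal once we change the inner product so that its range and kernel are orthogonal complements. The construction $\vect G=\vect T^{-T}\vect T^{-1}$ does exactly this, and everything else is routine verification.
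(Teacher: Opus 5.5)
Your proof is correct and is essentially the paper's argument. The paper also writes the idempotent as $\vect \Pi=\vect S^{-1}\operatorname{diag}(\vect I,\vect 0)\vect S$ and takes $\vect G=\vect S^{T}\vect S$, which is your $\vect G=\vect T^{-T}\vect T^{-1}$ with $\vect S=\vect T^{-1}$; the only differences are that you build the diagonalizing basis explicitly from $\ker(\vect A_{21})\oplus\operatorname{Im}(\vect A_{12})$ and check that $\vect G\vect \Pi$ is symmetric, whereas the paper invokes diagonalizability of idempotents and states the identity without verification.
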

\begin{proof}
Since:
\begin{gather}
 \vect A_{21}\vect \Pi
 =\vect A_{21}-\vect A_{21}\vect A_{12}\left(\vect A_{21}
 \vect A_{12}\right)^{-1}\vect A_{21}=\vect 0
\end{gather}
Hence $\operatorname{Im}(\vect \Pi)\subseteq \operatorname{ker}(\vect A_{21})$. On other hand, for any $\vect y_1 \in \operatorname{ker}(\vect A_{21})$,
\begin{gather}\label{eq2}
 \vect y_1=\vect y_1 -\vect A_{12}
  \left(\vect A_{21}\vect A_{12}\right)^{-1}\vect A_{21}\vect y_1
  =\vect \Pi \vect y_1 \in \operatorname{Im}\left(\vect \Pi\right)
\end{gather}
Hence $\operatorname{ker}(\vect A_{21})\subseteq\operatorname{Im}(\vect \Pi)$, and thus, $\operatorname{Im}(\vect \Pi)=\operatorname{ker}(\vect A_{21})$. This proves the first statement. For the second statement:
\begin{gather}
 \vect \Pi^2=\left(\vect I_{11}-\vect A_{12}
  \left(\vect A_{21}\vect A_{12}\right)^{-1}\vect A_{21}\right)^2
  =\vect I_{11}-2\vect A_{12}\left(\vect A_{21}\vect A_{12}\right)
  ^{-1}\vect A_{21}+\left(\vect A_{12}
  \left(\vect A_{21}\vect A_{12}\right)^{-1}\vect A_{21}\right)^2=\vect \Pi
\end{gather}
Since $\vect \Pi$ is idempotent, it is diagonalizable. Let
\begin{gather}
\vect \Pi=\vect S^{-1}
\begin{bmatrix}
\vect I & \vect 0\\
\vect 0 & \vect 0
\end{bmatrix}
\vect S
\end{gather}
Define $\vect G=\vect S^{T}\vect S$, which is symmetric positive definite since $\vect S$ is nonsingular. Then, $\vect \Pi^T\vect G=\vect G\vect \Pi$.
\end{proof}

%\begin{remark}
%	The construction of the inner product $\left\langle \cdot,\cdot \right\rangle_{\vect G}$ for CFD problems is discussed in Section \ref{G_FVM}, without explicitly using the structures of $\vect A_{12}$ and $\vect A_{21}$. For general problems, if $\vect A_{12}$ and $\vect A_{21}$ are available, one may seek symmetric positive-definite matrices $\vect G_1$ and $\vect G_2$ satisfying $\vect G_1\vect A_{12}=-\vect A_{21}^{T}\vect G_2$, which can be formulated as a semidefinite feasibility problem. The resulting $\vect G_1$  can then be used to define the weighted inner product and satisfies $\vect \Pi^T\vect G_1=\vect G_1\vect \Pi$, implying that $\vect \Pi$ is self-adjoint with respect to this inner product.
%\end{remark}

The proposition shows that $\vect \Pi$ is the $\vect G$-orthogonal projector onto $\operatorname{Im}(\vect \Pi)$. Consequently, there exists $\vect \Phi $ whose columns form a $\vect G$-orthonormal basis of $\operatorname{Im}(\vect \Pi)$, or equivalently $\vect \Phi^T \vect G\vect \Phi =\vect I$, such that $\vect \Pi=\vect \Phi \vect \Phi^T \vect G$. Since $\vect y_1\in \operatorname{ker}(\vect A_{21})=\operatorname{Im}(\vect \Pi)$, there is $\vect z$ such that $\vect y_1=\vect \Phi \vect z$. Thus, $\vect z=\vect \Phi^T \vect G \vect \Phi\vect z= \vect \Phi^T \vect G \vect y_1$ and $\vect \Phi^T \vect G \vect \Pi=\vect \Phi^T \vect G \vect \Phi\vect \Phi^T \vect G=\vect \Phi^T\vect G$. By multiplying $\vect \Phi^T \vect G$ on the left, and drop the $\mathcal O(\vect r^3)$ term, Equation (\ref{DE3}) becomes:
\begin{gather}
 \dot{\vect z}=\vect {\widetilde A} \vect z 
  + \vect {\widetilde B} \vect u 
  + \tensor {\widetilde H}\left(\vect z, \vect z\right)
  + \tensor {\widetilde N}\left(\vect z\right ) \vect u 
  + \tensor {\widetilde D}\left(\vect u, \vect u\right)
  + \tensor {\widetilde M}\left(\vect z\right)\dot{\vect u} 
  + \tensor {\widetilde K}\left(\vect u, \dot{\vect u}\right) \label{DE4}
\end{gather}
Where:
\begin{subequations}
\begin{gather}
 \vect {\widetilde{A}}=\vect \Phi^T\vect G\vect A_{11} \vect \Phi, \quad
 \vect {\widetilde{B}}=\vect \Phi^T \vect G
  \left(-\vect A_{11} \vect\Gamma_1 +\vect B_1\right),\\
 \tensor {\widetilde H}\left(\vect z, \vect z'\right)
  =\vect \Phi^T \vect G\tensor H
  \left(\vect \Phi\vect z, \vect \Phi\vect z'\right),\\
  \tensor {\widetilde N}\left(\vect z\right)
  =\vect \Phi^T \vect G\tensor N\left(\vect \Phi\vect z\right), \quad
 \tensor {\widetilde M}\left(\vect z\right)
  =\vect \Phi^T \vect G\tensor M\left(\vect \Phi\vect z\right),\\
  \tensor {\widetilde D}\left(\vect u, \vect u\right)
  =\vect \Phi^T \vect G\tensor D\left(\vect u, \vect u\right),\quad
  \tensor {\widetilde K}\left(\vect u, \dot{\vect u}\right)
  =\vect \Phi^T \vect G\tensor K\left(\vect u, \dot{\vect u}\right)
\end{gather}
\end{subequations}

\begin{remark}\label{rem}
Based on the analysis in this section, suppose that the solution of the following linear equations are $[\vect x_1^T,\vect x_2^T]^T=[\vectg \xi^T,\vectg \eta^T]^T$, under arbitrary $\sigma$, $\vect u$ and $\vect s$, where $\vect s$ has the same dimension of $\vect x_1$:
	\begin{subequations}
	\begin{gather}
 	\vect 0 =\vect A_{11}\vect x_1+\vect A_{12}\vect x_2
  	+\left(\vect B_1-\sigma\vect\Gamma_1\right)\vect u
  	+\vect s-\sigma \vect x_1\\
 	\vect 0=\vect A_{21}\vect x_1+\vect B_2\vect u
	\end{gather}
	\end{subequations}
and let $\vect z=\vectg \zeta$ be the solution of:
	\begin{gather}
 	\vect 0 =\vect {\widetilde A} \vect z 
  	+ \vect {\widetilde B} \vect u 
  	+ \vect \Phi^T \vect G\vect s-\sigma \vect z
	\end{gather}
The following equations hold:
	\begin{gather}
	\vectg \zeta = \vect \Phi^T \vect G\left(\vectg \xi 
  	+ \vect \Gamma_1 \vect u\right),\qquad
 	\vect \Phi\vectg \zeta = \vectg \xi 
  	+ \vect \Gamma_1 \vect u
  	\end{gather}
These identities will be useful in the construction of the Krylov subspaces.
\end{remark}

\subsection{ROM development}\label{ROMDev}

For the multi-input case, Ilyas Ahmad et al. \cite{A2016} constructed a single projection space using a moment-matching approach for QB systems of the following form to obtain a ROM.
\begin{gather}
 \dot{\vect x}=\vect A\vect x
  +\tensor H\left(\vect x, \vect x\right)
  +\vect B \vect u
  +\tensor N \left(\vect x\right)\vect u
\end{gather}
For each interpolation point $\sigma_i$, the projection space is constructed using:
\begin{gather}
\vect V_1^{i}=\left(\sigma_i \vect I-\vect {A}\right)^{-1}
  \vect B,\quad
\tensor V_2^{i}=\left(2\sigma_i \vect I-\vect {A}\right)^{-1}
  \operatorname{sym}\left(\tensor {H}\left(\vect V_1^{i}, \vect V_1^{i}\right)+\tensor N\left(\vect V_1^{i}\right)\right)
\end{gather}
where the multilinear operators $\tensor H$ and $\tensor N$ are extended to matrix arguments according to $\tensor H(\vect V,\vect W)_{:,p,q}=\tensor H(\vect v_p,\vect w_q)$ and $\tensor N(\vect V)_{:,:,p}=\tensor N(\vect v_p)$, where $\vect v_p$ and $\vect w_q$ denote the $p$-th and $q$-th columns of $\vect V$ and $\vect W$, respectively, with $1\leq p,q\leq m$, and $m$ is the number of manipulated variables. The operator $\operatorname{sym}(\cdot)$ denotes symmetrization with respect to the second and third modes and is defined as:
\begin{gather}
\operatorname{sym}\left(\tensor T\right)_{:,p,q}
=\frac{1}{2}\left(\tensor T_{:,p,q}+\tensor T_{:,q,p}\right)	
\end{gather}
The Krylov subspace is constructed as $\mathcal{V}=\sum_{i=1}^r\operatorname{span}\{\vect V_1^{i},\tensor V_2^{i}\}$,
where the span of a tensor is understood as the span of all its vectors along the first dimension, i.e., the vectors $(\tensor V_2^i)_{:,p,q}$ for all $p$ and $q$. Let $\vect V$ be the matrix whose columns form an orthonormal basis of $\mathcal{V}$. The reduced-order operators are constructed as \cite{A2016}:
\begin{gather}
\vect A_{r}=\vect{V}^T\vect A \vect{V},\quad
\vect B_{r}=\vect{V}^T\vect B,\quad
\tensor H_{r}\left(\vect x_{r}, \vect x_{r}\right)=\vect{V}^T\tensor H\left(\vect V \vect x_{r},\vect V\vect x_{r}\right),\quad
\tensor N_{r}\left(\vect x_{r}\right)=\vect{V}^T\tensor N \left(\vect V \vect x_{r}\right)
\end{gather}
The construction matches the first- and second-order transfer functions at $\sigma_i$ and ($\sigma_i$, $\sigma_i$), respectively, for all combinations of input channels:
\begin{gather}
 \vect G_{r,p}^{\left(1\right)}\left(\sigma_i\right)=\vect G_p^{\left(1\right)}\left(\sigma_i\right),\quad
 \vect G_{r,pq}^{\left(2\right)}\left(\sigma_i,\sigma_i\right)=\vect G_{pq}^{\left(2\right)}\left(\sigma_i,\sigma_i\right)
\end{gather}

With this construction, the vectors associated with different input channels or input combinations are included in the same projection space, causing the reduced operators to contain cross terms between these vectors. For example, let $\vect v=(\vect V_1^i)_{:,p}$, which is the vector obtained from the first-order construction for the input $u_p$, and let $\vect w$ be either $(\vect V_1^j)_{:,q}$, obtained from the first-order construction for $u_q$, or $(\tensor V_2^j)_{:, q, q'}$, obtained from the second-order construction for the input pair $(u_q, u_{q'})$. Since $\vect v$ and $\vect w$ are both included in the projection space, the reduced matrix $\vect A_r=\vect V^T\vect A\vect V$ contains the cross term $\vect v^T\vect A\vect w$. Consequently, terms associated with different input channels or input combinations are coupled through the projection, even though the corresponding vectors are constructed independently in the moment-matching procedure.

To avoid such coupling, the proposed method constructs the Krylov subspaces associated with the first- and second-order transfer functions of each input independently, while separate Krylov subspaces are constructed for the mixed-input second-order transfer functions. 
This construction avoids unnecessary coupling between the projection spaces associated with different transfer functions. This approach helps avoid potential numerical issues that may arise when constructing and running a ROM with larger matrices and tensors, thereby enabling the practical application of moment matching to multi-input QB descriptor systems.
The following theorem shows that, for one-sided projection, the proposed construction preserves the same interpolation conditions as the Krylov-subspace construction by Ilyas Ahmad et al. \cite{A2016}. The formulation in Theorem \ref{theo1} is considered because, by treating both $\vect u$ and $\dot{\vect u}$ as inputs, Equation (\ref{DE4}) can be regarded as the ODE system given by Equation (\ref{proveFOM}) in Theorem \ref{theo1}:

\begin{theorem} \label{theo1} 
Suppose there is a finite-dimensional quadratic-bilinear ODE system of multiple manipulated variables:
\begin{gather}
 \dot{\vect x}=\vect A\vect x
  +\tensor H\left(\vect x, \vect x\right)
  +\sum_{1\leq p \leq m} \vect b_p u_p
  +\sum_{1\leq p \leq m}\vect N_pu_p \vect x
  +\sum_{1\leq p, q\leq m} {\vect d_{pq}}u_pu_q\label{proveFOM}
\end{gather}
Let $\sigma_i$ be the real interpolation points such that $(\sigma_i \vect I-\vect A)$ and $(2\sigma_i \vect I-\vect A)$ are invertible, $i\in \{1,2,\ldots,r\}$. Let:
\begin{subequations}
\begin{gather}
\vectg \zeta_{p}^{i}=\left(\sigma_i \vect I-\vect {A}\right)^{-1}
  \vect b_p\\
\vectg \zeta_{pq}^{i}=\frac{1}{2}\left(2\sigma_i \vect I-\vect {A}\right)^{-1}
  \left(\tensor {H}\left(\vectg \zeta_{p}^{i}, \vectg \zeta_{q}^{i}\right)+\vect N_p \vectg \zeta_{q}^{i}+\vect d_{pq}
   + \tensor {H}\left(\vectg \zeta_{q}^{i}, \vectg \zeta_{p}^{i}\right)
   +\vect N_q \vectg \zeta_{p}^{i}+\vect d_{qp}\right)
\end{gather}
\end{subequations}
For any  $1\leq p \leq q\leq m$. The Krylov subspaces are constructed as follows:
\begin{gather}
 \mathcal{V}_p=
  \sum_{i=1}^r\operatorname{span}
  \left\{\vectg \zeta_{p}^{i},\vectg \zeta_{pp}^{i}\right\},\quad
  \mathcal{V}_{pq}=
  \sum_{i=1}^r\operatorname{span}
  \left\{\vectg {\zeta}_{pq}^{i}\right\} \quad \textnormal{for }p< q
\end{gather}
Let $\vect{V}_p$ and $\vect{V}_{pq}$ be matrices whose columns form orthonormal bases of $\mathcal{V}_p$ and $\mathcal{V}_{pq}$, respectively. The reduced-order operators are of the form, for each $p=1,\dots, m$:
\begin{subequations}
\begin{gather}
\vect A_{r,p}=\vect{V}_p^T\vect A \vect{V}_p,\quad
\tensor H_{r,p}\left(\vect x_{r,p}, \vect x_{r,p}\right)=\vect{V}_p^T\tensor H\left(\vect V_p \vect x_{r,p},\vect V_p\vect x_{r,p}\right),\\
\vect b_{r,p}=\vect{V}_p^T\vect b_p,\quad
\vect N_{r,p}=\vect{V}_p^T\vect N_p \vect{V}_p,\quad
\vect d_{r,p}=\vect{V}_p^T\vect d_{pp} 
\end{gather}
\end{subequations}
and for any $1\leq p < q\leq m$:
\begin{subequations}
\begin{gather}
\vect A_{r,pq}=\vect{V}_{pq}^T\vect A \vect{V}_{pq},\quad
\tensor H_{r,pq}\left(\vect x_{r,p}, \vect x_{r,q}\right)=\frac12 \vect V_{pq}^T\left(\tensor H\left(\vect V_p \vect x_{r,p},\vect V_q\vect x_{r,q}\right)+\tensor H\left(\vect V_q\vect x_{r,q},\vect V_p\vect x_{r,p}\right)\right),\\
\vect N_{r,pq}=\vect{V}_{pq}^T\vect N_p \vect{V}_q,\quad
\vect N_{r,qp}=\vect{V}_{pq}^T\vect N_q \vect{V}_p,\quad
\vect d_{r,pq}=\frac12\vect{V}_{pq}^T\left(\vect d_{pq}+\vect d_{qp}\right)
\end{gather}
\end{subequations}
Assume further that the interpolation points $\sigma_i$ make the reduced resolvent matrices $(\sigma_i\vect I-\vect A_{r,p})$, $(2\sigma_i\vect I-\vect A_{r,p})$, and $(2\sigma_i\vect I-\vect A_{r,pq})$ nonsingular, for $i\in \{1,2,\ldots,r\}$. The ROM systems:
\begin{subequations}\label{rom}
\begin{gather}
 \dot{\vect x}_{r,p}=\vect A_{r,p}\vect x_{r,p}
  +\tensor H_{r,p}\left(\vect x_{r,p}, \vect x_{r,p}\right)
  +\vect b_{r,p} u_{p}
  +\vect N_{r,p}u_p \vect x_{r,p}
  +{\vect d_{r,p}}u_p^2 \label{rom1}\\
 \dot{\vect x}_{r,pq}=\vect A_{r,pq}\vect x_{r,pq}
  +\tensor H_{r,pq}\left(\vect x_{r,p},\vect x_{r,q}\right)
  +\frac{1}{2}\left(\vect N_{r,pq}u_p \vect x_{r,q}
  +\vect N_{r,qp}u_q \vect x_{r,p}\right)
  +{\vect d_{r,pq}}u_pu_q\label{rom2}\\
 \vect {\hat x}=\sum_{1\leq p\leq m}\vect{V}_p\vect x_{r,p}+\sum_{1\leq p < q\leq m}\vect{V}_{pq}\vect x_{r,pq}\label{rom3}
\end{gather}
\end{subequations}
matches the following transfer functions for the state variables, for $i\in \{1,2,\ldots,r\}$ and any $1\leq p, q\leq m$:
\begin{gather}
 \vect G_{r,p}^{(1)}\left(\sigma_i\right)=\vect G_p^{(1)}\left(\sigma_i\right),\quad
 \vect G_{r,pq}^{(2)}\left(\sigma_i,\sigma_i\right)=\vect G_{pq}^{(2)}\left(\sigma_i,\sigma_i\right)
\end{gather}
\end{theorem}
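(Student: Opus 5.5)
Our approach is to compute the first- and second-order symmetric Volterra transfer functions of the full model (\ref{proveFOM}) and of the ROM (\ref{rom}) explicitly in terms of the resolvents, and then to show that each reduced resolvent reproduces the corresponding full-order Krylov vector. We use the growing-exponential (variational) approach. We take $u_p=\alpha_p e^{s_1 t}+\beta_p e^{s_2 t}$, expand $\vect x$ in powers of the amplitudes, and collect coefficients. This gives
\begin{gather*}
\vect G^{(1)}_p(s)=(s\vect I-\vect A)^{-1}\vect b_p .
\end{gather*}
The symmetrized second-order function on the diagonal $(s,s)$, for the input pair $(p,q)$, is
\begin{gather*}
\vect G^{(2)}_{pq}(s,s)=\tfrac12(2s\vect I-\vect A)^{-1}\bigl(\tensor H(\vect G^{(1)}_p,\vect G^{(1)}_q)+\tensor H(\vect G^{(1)}_q,\vect G^{(1)}_p)+\vect N_p\vect G^{(1)}_q+\vect N_q\vect G^{(1)}_p+\vect d_{pq}+\vect d_{qp}\bigr).
\end{gather*}
Hence, by definition, $\vect G^{(1)}_p(\sigma_i)=\vectg\zeta^i_p$ and $\vect G^{(2)}_{pq}(\sigma_i,\sigma_i)=\vectg\zeta^i_{pq}$.

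Doing the same expansion for (\ref{rom}), the first-order part of $\hat{\vect x}$ driven by $u_p$ comes only from (\ref{rom1}), because (\ref{rom2}) has no linear forcing. It therefore equals $\vect V_p(s\vect I-\vect A_{r,p})^{-1}\vect V_p^T\vect b_p$. The second-order part splits into two cases.
\begin{itemize}
\item For $p=q$, it comes from (\ref{rom1}) alone: $\vect V_p(2s\vect I-\vect A_{r,p})^{-1}\vect V_p^T(\cdots)$, where the bracket is built from the reduced first-order responses.
\item For $p<q$, it comes from (\ref{rom2}): $\vect V_{pq}(2s\vect I-\vect A_{r,pq})^{-1}\vect V_{pq}^T(\cdots)$. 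Here the bracket contains the factor $\tfrac12$, the symmetrized $\tensor H$ evaluated at $\vect V_p\vect x^{(1)}_{r,p}$ and $\vect V_q\vect x^{(1)}_{r,q}$, the terms $\vect N_p\vect V_q\vect x^{(1)}_{r,q}$ and $\vect N_q\vect V_p\vect x^{(1)}_{r,p}$, and the terms $\vect d$.
\end{itemize}
We must also check that no other block contributes. The quadratic forcing in (\ref{rom1}) involves only $u_p$ and $\vect x_{r,p}$, so it does not produce $u_pu_q$ cross terms with $q\neq p$. For $q<p$ we read the formula with the roles of $p$ and $q$ swapped; since the transfer functions are symmetrized, $\vect G^{(2)}_{pq}=\vect G^{(2)}_{qp}$, so this case is covered.

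The key tool is the standard projection lemma. Let $\vect V$ have orthonormal columns, let $\vect v\in\operatorname{span}(\vect V)$, and suppose $\vect w=(\sigma\vect I-\vect A)^{-1}\vect f$ lies in $\operatorname{span}(\vect V)$. If $\sigma\vect I-\vect V^T\vect A\vect V$ is nonsingular, then
\begin{gather*}
\vect V(\sigma\vect I-\vect V^T\vect A\vect V)^{-1}\vect V^T\vect f=\vect w .
\end{gather*}
To prove it, write $\vect w=\vect V\vect c$. Then $\vect V^T(\sigma\vect I-\vect A)\vect V\vect c=\vect V^T\vect f$, and uniqueness gives the claim. Applying the lemma with $\vect f=\vect b_p$ and $\vectg\zeta^i_p\in\mathcal V_p$ gives first-order matching: the reduced first-order state satisfies $\vect V_p\vect x^{(1)}_{r,p}(\sigma_i)=\vectg\zeta^i_p$.

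Substituting this identity into the second-order brackets, the reduced forcing becomes $\vect V^T_{\ast}$ applied to exactly the full-order forcing vector that defines $\vectg\zeta^i_{pp}$ or $\vectg\zeta^i_{pq}$. Note that $\vect N_{r,p}\vect x_{r,p}=\vect V_p^T\vect N_p\vect V_p\vect x_{r,p}$, so it matches as well. Applying the lemma again, with $2\sigma_i$ in place of $\sigma$, to $\vectg\zeta^i_{pp}\in\mathcal V_p$ and $\vectg\zeta^i_{pq}\in\mathcal V_{pq}$ yields second-order matching.

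The main obstacle is the bookkeeping in the second-order expansion of the coupled ROM. We must verify that the factor $\tfrac12$ in front of the $\vect N$ terms in (\ref{rom2}), together with the $\tfrac12$ inside $\tensor H_{r,pq}$ and $\vect d_{r,pq}$, reproduces exactly the symmetrized full-order forcing for the $u_pu_q$ component, with consistent normalization between the $p=q$ and $p\neq q$ cases. We would do this by carefully collecting the coefficient of $\alpha_p\beta_q e^{(s_1+s_2)t}$ plus its symmetric counterpart, in the same convention for both models, and then setting $s_1=s_2=\sigma_i$.
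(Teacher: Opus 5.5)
Your mechanics are the same as the paper's. You write each Krylov vector as $\vect V\vect c$, use $\vect V^T(\sigma\vect I-\vect A)\vect V\vect c=\vect V^T\vect f$ together with uniqueness, substitute first-order matching $\vect V_p\vect x^i_{r,p}=\vectg\zeta^i_p$ into the second-order forcing, and repeat the argument at $2\sigma_i$. What this proves is exactly what the paper proves. Block (\ref{rom1}) returns $\vectg\zeta^i_{pp}$. The cross block (\ref{rom2}), whose forcing is $\tfrac12\vect V_{pq}^T(\cdots)$, returns $\vect V_{pq}\vect x^i_{r,pq}=\vectg\zeta^i_{pq}$.

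The genuine gap is the step you defer as bookkeeping, and it cannot be closed the way you plan. Suppose you collect the coefficient of $\alpha_p\beta_q e^{(s_1+s_2)t}$ with one convention for both models. For $p\neq q$, the full model gives
\begin{gather*}
\left(2\sigma_i\vect I-\vect A\right)^{-1}\bigl(\tensor H(\vectg\zeta^i_p,\vectg\zeta^i_q)+\tensor H(\vectg\zeta^i_q,\vectg\zeta^i_p)+\vect N_p\vectg\zeta^i_q+\vect N_q\vectg\zeta^i_p+\vect d_{pq}+\vect d_{qp}\bigr)=2\vectg\zeta^i_{pq}.
\end{gather*}
In the reduced model, the only $u_pu_q$ contribution comes from block (\ref{rom2}), and it gives $\vectg\zeta^i_{pq}$. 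For $p=q$, both models give $2\vectg\zeta^i_{pp}$. So the off-diagonal normalization is off by exactly a factor $2$.

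A scalar check makes this concrete. Take $\dot x=ax+hx^2+b_1u_1+b_2u_2$, and let $x_1,x_2$ denote the first-order responses to $u_1,u_2$. Every basis can be taken equal to $1$, so block (\ref{rom2}) reads $\dot x_{r,12}=ax_{r,12}+hx_{r,1}x_{r,2}$. The second-order forcing of $\hat x$ is then $h(x_1^2+x_1x_2+x_2^2)$ instead of $h(x_1+x_2)^2$, even though nothing has been reduced.

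The paper's proof obtains the stated equality only through its identification of the two sides. It takes $\vect G^{(2)}_{r,pq}(\sigma_i,\sigma_i)$ to be the entire output of block (\ref{rom2}), while $\vect G^{(2)}_{pq}$ is the halved, symmetrized full-order kernel. To finish, you have two options. You can adopt that identification explicitly; then your argument is complete and coincides with the paper's. Or, if you keep a single convention for both models, the cross-term claim holds only after doubling the forcing of block (\ref{rom2}), which is equivalent to counting $\vect V_{pq}\vect x_{r,pq}$ twice in (\ref{rom3}).
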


\begin{proof}
The right-hand side of Equation (\ref{proveFOM}) is a polynomial, and by the assumption of finite dimension, is analytic. For inputs $u_p(t)=\varepsilon \bar u_p(t)$, $1\leq p\leq m$, the state variable response can be written as local expansion: 
 $\vect x=\varepsilon \vect x^{(1)}+\varepsilon^2 \vect x^{(2)}+\mathcal O(\varepsilon^3)$.
 Put back into Equation (\ref{proveFOM}) and collect the $\mathcal O(\varepsilon)$ term:
 \begin{gather}
  \dot{\vect x}^{(1)}=\vect A \vect x^{(1)}+\sum_{1\le p\le m}\vect b_p\bar u_p
 \end{gather}
 Take the Laplace transform, the transfer function is:
 \begin{gather}
 \vect G_p^{(1)}\left(s\right)=\vect R\left(s\right)\vect b_p \label{proveG1}
 \end{gather}
 Where $\vect R(s)=(s\vect I-\vect A)^{-1}$. Collect the $\mathcal O(\varepsilon^2)$ term:
 \begin{gather}
 \dot{\vect x}^{(2)}=\vect A\vect x^{(2)}+\tensor H\left(\vect x^{(1)},\vect x^{(1)}\right)+\sum_{1\le p\le m} \vect N_p\bar u_p \vect x^{(1)}+\sum_{1\le p, q\le m}\vect d_{pq}\bar u_p\bar u_q
 \end{gather}
 Apply the first order transfer functions of $p$ and $q$ to the equation, and notice that the second order frequency $s=s_1+s_2$, the symmetrized second order transfer function of $p$ and $q$ is, with $1\le p, q\le m$:
 \begin{gather}
 \begin{split}
 \vect G_{pq}^{(2)}\left(s_1,s_2\right)=\frac12 \vect R\left(s_1+s_2\right)
  &\bigl(\tensor H\left(\vect R\left(s_1\right)\vect b_p,\vect R\left(s_2\right)\vect b_q\right)
   +\tensor H\left(\vect R\left(s_2\right)\vect b_q,\vect R\left(s_1\right)\vect b_p\right)\\&\qquad\qquad\qquad\qquad\qquad
   +\vect N_p \vect R\left(s_2\right)\vect b_q
   +\vect N_q \vect R\left(s_1\right)\vect b_p+\vect d_{pq}+\vect d_{qp}\bigr) 
   \label{proveG2}
 \end{split}
 \end{gather}
 Since $\vectg \zeta_{p}^{i}=(\sigma_i \vect I-\vect {A})^{-1}
  \vect b_p \in \mathcal{V}_{p}$, there exists $\vect x_{r,p}^{i}$ such that $\vectg \zeta_{p}^{i} = \vect{V}_p \vect x_{r,p}^{i}$, thus:
 \begin{gather}
 \left(\sigma_i \vect I-\vect {A}_{r,p}\right)\vect x_{r,p}^{i}
  =\vect{V}_p^T
  \left(\sigma_i \vect I-\vect {A}\right)\vect{V}_p\vect x_{r,p}^{i}
  =\vect{V}_p^T
  \left(\sigma_i \vect I-\vect {A}\right)\vectg \zeta_{p}^{i}
  =\vect{V}_p^T\vect b_p
  =\vect b_{r,p}\\
 \vect G_{r,p}^{(1)}\left(\sigma_i\right)=\vect{V}_p\left(\sigma_i \vect I-\vect {A}_{r,p}\right)^{-1}\vect b_{r,p}=\vect{V}_p\vect x_{r,p}^{i}=\vectg \zeta_{p}^{i}=\vect G_{p}^{(1)}\left(\sigma_i\right) \label{proveGr1p}
 \end{gather}
 Where Equation (\ref{proveG1}) has been used for FOM and ROM systems. Similarly, there exists $\vect x_{r,pp}^{i}$ and $\vect x_{r,pq}^{i}$ such that $\vectg \zeta_{pp}^{i}= \vect{V}_p \vect x_{r,pp}^{i}$ and $\vectg \zeta_{pq}^{i}= \vect{V}_{pq} \vect x_{r,pq}^{i}$, respectively. Then,
%\begin{subequations}
\begin{gather}
 \begin{split}
 \left(2\sigma_i \vect I-\vect {A}_{r,p}\right)\vect x_{r,pp}^{i}
  &=\vect{V}_p^T\left(2\sigma_i \vect I-\vect {A}\right)
  \vect{V}_p\vect x_{r,pp}^{i}
  =\vect{V}_p^T\left(2\sigma_i \vect I-\vect {A}\right)
  \zeta_{pp}^{i}\\
  &=\vect{V}_p^T
  \left(\tensor {H}\left(\vectg \zeta_{p}^{i},\vectg \zeta_{p}^{i}\right)
  +\vect N_p \vectg \zeta_{p}^{i}
  +\vect d_{pp}\right)
  =\tensor H_{r,p}\left(\vect x_{r,p}^i, \vect x_{r,p}^i\right)
  +\vect N_{r,p} \vect x_{r,p}^i
  +{\vect d_{r,p}}
 \end{split}\\
 \begin{split}
 \left(2\sigma_i \vect I-\vect {A}_{r,pq}\right)\vect x_{r,pq}^{i}
  &=\vect{V}_{pq}^T\left(2\sigma_i \vect I-\vect {A}\right)
  \vect{V}_{pq}\vect x_{r,pq}^{i}
  =\vect{V}_{pq}^T\left(2\sigma_i \vect I-\vect {A}\right)
  \zeta_{pq}^{i}\\
  &=\frac{1}{2}\vect{V}_{pq}^T\left(\tensor {H}\left(\vectg \zeta_{p}^{i}, \vectg \zeta_{q}^{i}\right)+\vect N_p \vectg \zeta_{q}^{i}+\vect d_{pq}
   + \tensor {H}\left(\vectg \zeta_{q}^{i}, \vectg \zeta_{p}^{i}\right)
   +\vect N_q \vectg \zeta_{p}^{i}+\vect d_{qp}\right)\\
  &=\tensor H_{r,pq}\left(\vect x_{r,p}^i,\vect x_{r,q}^i\right)
  +\frac{1}{2}\left(\vect N_{r,pq} \vect x_{r,q}^i
  +\vect N_{r,qp} \vect x_{r,p}^i\right)
  +{\vect d_{r,pq}}
 \end{split}
\end{gather}
%\end{subequations}
Using Equation (\ref{proveG2}) for FOM and ROM systems, the second-order transfer functions of the ROM are:
%\begin{subequations}\label{proveGr2}
\begin{align}
    \vect G_{r,pp}^{(2)}\left(\sigma_i,\sigma_i\right)
    &=\vect{V}_p\left(2\sigma_i \vect I-\vect {A}_{r,p}\right)^{-1}
    \left(\tensor H_{r,p}\left(\vect x_{r,p}^i, \vect x_{r,p}^i\right)
  +\vect N_{r,p}\vect x_{r,p}^i
  +{\vect d_{r,p}}\right)=\vect{V}_p\vect x_{r,pp}^{i}
  =\vectg \zeta_{pp}^{i}=\vect G_{pp}^{(2)}\left(\sigma_i,\sigma_i\right)
  \label{proveGr2pp}\\
    \begin{split}
        \vect G_{r,pq}^{(2)}\left(\sigma_i,\sigma_i\right)
        &=\vect{V}_{pq}\left(2\sigma_i \vect I-\vect {A}_{r,pq}\right)^{-1}
        \left(\tensor H_{r,pq}\left(\vect x_{r,p}^i,\vect x_{r,q}^i\right)
  +\frac{1}{2}\left(\vect N_{r,pq} \vect x_{r,q}^i
  +\vect N_{r,qp} \vect x_{r,p}^i\right)
  +{\vect d_{r,pq}}\right) \\ 
  &=\vect{V}_{pq}\vect x_{r,pq}^{i}=\vectg \zeta_{pq}^{i}
  =\vect G_{pq}^{(2)}\left(\sigma_i,\sigma_i\right)
    \end{split} \label{proveGr2pq}
\end{align}
%\end{subequations}
The conclusion follows by Equation (\ref{proveGr1p}), (\ref{proveGr2pp}) and (\ref{proveGr2pq}).
\end{proof}

By regarding Equation (\ref{DE4}) as the ODE system in Theorem \ref{theo1} and treating $\vect u$ and $\dot{\vect u}$ as independent inputs, the resulting system has $2m$ inputs, where $m$ is the dimension of $\vect u$. Therefore, $\vectg \zeta_{p}^{i}$ and $\vectg \zeta_{pq}^{i}$ should, in general, be considered for all $1\leq p,q\leq 2m$. However, since the first-order source term $\vect{\widetilde B}\vect u$ does not contain $\dot{\vect u}$, $\vectg \zeta_{p}^{i}$ vanishes for $p>m$. Consequently, together with the absence of a $\dot{\vect u}\dot{\vect u}$ term in Equation (\ref{DE4}), $\vectg \zeta_{pq}^{i}$ vanishes for $m<p,q\leq 2m$. Based on Theorem \ref{theo1}, the Krylov subspaces are constructed by choosing $r$ interpolation points $\sigma_i$ for each $1\leq p,q\leq m$ as follows:
\begin{subequations}
\begin{gather}
\vectg \zeta_{p}^{i}=\left(\sigma_i-\vect {\widetilde{A}}\right)^{-1}
  \vect {\widetilde{B}} \vect e_p\\
\vectg \zeta_{pq}^{i}=\frac{1}{2}\left(2\sigma_i-\vect {\widetilde{A}}\right)^{-1}
  \left(\tensor {\widetilde H}\left(\vectg \zeta_{p}^{i}, \vectg \zeta_{q}^{i}\right) +\tensor {\widetilde N}\left(\vectg \zeta_{p}^{i}\right)\vect e_q+\vect {\widetilde D}\left(\vect e_p, \vect e_q\right)
  +\tensor {\widetilde H}\left(\vectg \zeta_{q}^{i}, \vectg \zeta_{p}^{i}\right) 
  +\tensor {\widetilde N}\left(\vectg \zeta_{q}^{i}\right)\vect e_p +\vect {\widetilde D}\left(\vect e_q, \vect e_p\right)\right)\quad  \textnormal{for }p\leq q\\
\vectg \zeta_{p\left(q+m\right)}^{i}=\frac{1}{2}
  \left(2\sigma_i-\vect{\widetilde{A}}\right)^{-1}\left(
  \tensor {\widetilde M}\left(\vectg \zeta_{p}^{i} \right)\vect e_q+\tensor {\widetilde K}\left(\vect e_p, \vect e_q\right)\right)
\end{gather}
\end{subequations}
Where $\vect e_p=[0,\ldots,0,1,0,\ldots,0]^T$ is of $m$ dimension, whose $p$th element is $1$ and other elements are $0$. The Krylov subspaces for Equation(\ref{DE4}) is, for $1\leq p\leq m$ and $p<q \leq 2m$:
\begin{gather}
 \mathcal{V}_p=
  \sum_{i=1}^r\operatorname{span}
  \left\{\vectg \zeta_{p}^{i},\vectg \zeta_{pp}^{i}\right\},\quad
 \mathcal{V}_{pq}=
  \sum_{i=1}^r\operatorname{span}
  \left\{\vectg \zeta_{pq}^{i}\right\}
\end{gather}
Let $\vectg \xi_{p}^{i}$ and $\vectg \eta_{p}^{i}$ be the solution of $\vect  x_1$ and $\vect  x_2$, respectively, under $\sigma_i$ and $\vect e_p$, of the following equations, 
\begin{subequations}\label{eq1}
\begin{gather}
 \vect 0=\vect A_{11}\vect x_1+\vect A_{12}\vect x_2
  +\vect B_1\vect e_p-\sigma_i\vect\Gamma_1\vect e_p-\sigma_i \vect x_1\label{eq11}\\
 \vect 0=\vect A_{21}\vect x_1+\vect B_2\vect e_p \label{eq12}
\end{gather}
\end{subequations}
And let $\vectg \xi_{pq}^{i}$ and $\vectg \eta_{pq}^{i}$ be the solution of $\vect x_1$ and $\vect x_2$, respectively, of the following equations, for any $1\leq p \leq q\leq m$:
\begin{subequations}
\begin{align}
\begin{split}
 \vect 0=\vect A_{11}\vect x_1+\vect A_{12}\vect x_2-2\sigma_i \vect x_1+&\frac12  \bigl(\tensor H\left(\vectg \xi_{p}^{i}+\vect \Gamma_1 \vect e_p, \vectg \xi_{q}^{i}+\vect \Gamma_1 \vect e_q\right)
  +\tensor N\left(\vectg \xi_{p}^{i}+\vect \Gamma_1 \vect e_p\right) \vect e_q
  +\tensor D\left(\vect e_p, \vect e_q\right)\\
  &+\tensor H\left(\vectg \xi_{q}^{i}+\vect \Gamma_1 \vect e_q, \vectg \xi_{p}^{i}+\vect \Gamma_1 \vect e_p\right)
  +\tensor N\left(\vectg \xi_{q}^{i}+\vect \Gamma_1 \vect e_q\right) \vect e_p
  +\tensor D\left(\vect e_q, \vect e_p\right)\bigr)\\
\end{split}\label{eq21}\\
 &\vect 0=\vect A_{21}\vect x_1\label{eq22}
\end{align}
\end{subequations}
And let $\vectg \xi_{p(q+m)}^{i}$ and $\vectg \eta_{p(q+m)}^{i}$ be the solution of $\vect x_1$ and $\vect x_2$, respectively, of the following equations, for any $1\leq p,q\leq m$:
\begin{subequations}
\begin{gather}
 \vect 0=\vect A_{11}\vect x_1+\vect A_{12}\vect x_2-2\sigma_i \vect x_1
 +\frac12\left(\tensor M\left(\vectg \xi_{p}^{i}+\vect \Gamma_1 \vect e_p\right)\vect e_q
  +\tensor K\left(\vect e_p, \vect e_q\right)\right)\label{eq31}\\
 \vect 0=\vect A_{21}\vect x_1\label{eq32}
\end{gather}
\end{subequations}
Then, based on Remark \ref{rem}, we have the following equation, where $i\in \{1,2,\ldots,r\}$, $1\leq p\leq m$ and $p \leq q\leq 2m$:
\begin{subequations}\label{VWcov}
\begin{gather}
 \vectg \zeta_{p}^{i} = \vect \Phi^T \vect{G}\left(\vectg \xi_{p}^{i} 
  + \vect \Gamma_1 \vect e_p\right),\quad 
 \vect \Phi\vectg \zeta_{p}^{i} = \vectg \xi_{p}^{i} 
  + \vect \Gamma_1 \vect e_p\label{VWcov1}\\
 \vectg \zeta_{pq}^{i} = \vect \Phi^T\vect{G} \vectg \xi_{pq}^{i},\quad
 \vect \Phi \vectg \zeta_{pq}^{i} = \vectg \xi_{pq}^{i}\label{VWcov2}
\end{gather}
\end{subequations}

Define:
\begin{gather}
 \mathcal{W}_p=
 \sum_{i=1}^r\operatorname{span}
 \left\{\vectg \xi_{p}^{i}+\vect \Gamma_1 \vect e_p,\vectg \xi_{pp}^{i}\right\},\quad
 \mathcal{W}_{pq}=
  \sum_{i=1}^r\operatorname{span}
  \left\{\vectg \xi_{pq}^{i}\right\}\label{Kspace}
\end{gather}
Let the $\vect G$-orthonormal basis matrices of $\mathcal{W}_p$ and $\mathcal{W}_{pq}$ be $\vect{W}_p$ and $\vect{W}_{pq}$, respectively. From Equation (\ref{VWcov}), $\mathcal{V}_p$ and $\mathcal{V}_{pq}$ are spanned by the column of $\vect \Phi^T\vect G \vect{W}_p$ and $\vect \Phi^T\vect G \vect{W}_{pq}$, respectively. Since:
 \begin{gather}
 	\left(\vect \Phi^T \vect G \vect{W}_p\right)^T\vect \Phi^T \vect G \vect{W}_p=\vect{W}_p^T \vect G\vect \Phi \vect \Phi^T \vect G \vect{W}_p=\vect{W}_p^T \vect G \vect \Pi \vect{W}_p =\vect{W}_p^T \vect G \vect{W}_p=\vect I
 \end{gather}
 and, similarly $(\vect \Phi^T \vect G \vect{W}_{pq})^T\vect \Phi^T \vect G \vect{W}_{pq} = \vect I$, the columns of $\vect{\Phi}^{T}\vect{G}\vect{W}_p$ and $\vect{\Phi}^{T}\vect{G}\vect{W}_{pq}$ form orthonormal bases for $\mathcal{V}_p$ and $\mathcal{V}_{pq}$, respectively. We therefore define $ \vect{V}_p=\vect{\Phi}^{T}\vect{G}\vect{W}_p$, $\vect{V}_{pq}=\vect{\Phi}^{T}\vect{G}\vect{W}_{pq}$. Consequently, $\vect{W}_p=\vect \Phi \vect{V}_p$, $\vect{W}_{pq}=\vect \Phi \vect{V}_{pq}$. The ROM can be constructed by Theorem \ref{theo1}, for $1\leq p\leq m$:
\begin{subequations}\label{rommatrix1}
\begin{gather}
\vect A_{r,p}=\vect{V}_p^T\vect {\widetilde A} \vect{V}_p=
\vect{V}_p^T\vect \Phi^T\vect G\vect A_{11} \vect \Phi \vect{V}_p=
\left\langle\vect \Phi \vect{V}_p,\vect A_{11} \vect \Phi \vect{V}_p\right\rangle_{\vect G}
= \left\langle\vect W_p,\  \vect A_{11} \vect W_p\right\rangle_{\vect G},\\
\vect b_{r,p}=\vect{V}_p^T\vect {\widetilde B} \vect e_p
=\left\langle\vect W_p,\  \left(-\vect A_{11}\vect \Gamma_1  
  +\vect B_1\right) \vect e_p\right\rangle_{\vect G},\\
\tensor H_{r,p}\left(\vect x_{r,p}, \vect x_{r,p}\right)=
\vect{V}_p^T \tensor {\widetilde H}\left(\vect V_p \vect x_{r,p},\vect V_p\vect x_{r,p}\right)=
\left\langle\vect W_p,\   \tensor {H}\left(\vect W_p \vect x_{r,p},\vect W_p\vect x_{r,p}\right)\right\rangle_{\vect G},\\
\vect N_{r,p}\left(\vect x_{r,p} \right)=\vect{V}_p^T\tensor {\widetilde N} \left(\vect{V}_p \vect x_{r,p}\right)\vect e_p=\left\langle\vect W_p,\  \tensor  N \left(\vect{W}_p \vect x_{r,p}\right)\vect e_p\right\rangle_{\vect G},\\
\vect d_{r,p}=\vect{V}_p^T\tensor {\widetilde D}\left(\vect e_p, \vect e_p\right) =\left\langle\vect W_p,\  \tensor {D}\left(\vect e_p, \vect e_p\right)\right\rangle_{\vect G}
\end{gather}
\end{subequations}
The ROMs for the cross terms could be calculated similarly. For $1\leq p< q\leq m$:
\begin{subequations}\label{rommatrix2}
\begin{gather}
\vect A_{r,pq}=\left\langle\vect W_{pq},\  \vect A_{11} \vect{W}_{pq}\right\rangle_{\vect G},\\
\tensor H_{r,pq}\left(\vect x_{r,p}, \vect x_{r,q}\right)=\frac12 \left\langle\vect W_{pq},\  \tensor H\left(\vect W_p \vect x_{r,p},\vect W_q\vect x_{r,q}\right)+\tensor H\left(\vect W_q\vect x_{r,q},\vect W_p\vect x_{r,p}\right)\right\rangle_{\vect G},\\
\vect N_{r,pq}\left(\vect x_{r,q} \right)=\left\langle\vect W_{pq},\  \tensor N \left(\vect{W}_q \vect x_{r,q}\right)\vect e_p\right\rangle_{\vect G},\quad
\vect N_{r,qp}\left(\vect x_{r,p} \right)=\left\langle\vect W_{pq},\  \tensor N \left(\vect{W}_p \vect x_{r,p}\right)\vect e_q\right\rangle_{\vect G},\\
\vect d_{r,pq}=\frac12\left\langle\vect W_{pq},\ \tensor D\left(\vect e_p, \vect e_q\right)+\tensor D\left(\vect e_q, \vect e_p\right)\right\rangle_{\vect G}
\end{gather}
\end{subequations}
and for $1\leq p\leq m$, $m< q\leq 2m$:
\begin{subequations}\label{rommatrix3}
\begin{gather}
\vect A_{r,pq}=\left\langle\vect W_{pq},\ \vect A_{11} \vect{W}_{pq}\right\rangle_{\vect G},\quad \tensor H_{r,pq}=\vect 0, \\
\vect N_{r,pq}=\vect 0, \quad
\vect N_{r,qp}\left(\vect x_{r,p} \right)=\left\langle\vect W_{pq},\ \tensor M \left(\vect{W}_p \vect x_{r,p}\right)\vect e_{q-m}\right\rangle_{\vect G},\quad
\vect d_{r,pq}=\frac12\left\langle\vect W_{pq},\ \tensor K\left(\vect e_p, \vect e_{q-m}\right)\right\rangle_{\vect G}
\end{gather}
\end{subequations}
where, for conformable matrices or vectors $\vect X$ and $\vect Y$, we use the notation $\langle \vect X,\vect Y\rangle_{\vect G}:=\vect X^T\vect G\vect Y$. The ROM systems can be constructed by Equation (\ref{rom}) in Theorem \ref{theo1}. The ROM has a total of $2m$ inputs, corresponding to $[\vect u^T,\dot{\vect u}^T]^T$. Any terms in Equation (\ref{rom}) that are not explicitly specified above are taken to be zero.

\section{Computational Implementation}\label{CompIssue}
\subsection{Procedure of ROM construction}
Solving Equation (\ref{eq21}) requires the calculation of $(\vect A_{21}\vect A_{12})^{-1}\vect A_{21}\vect A_{11}(\vectg \xi_{p}^{i}+\vect \Gamma_1 \vect e_p)$ term, as it appears in Equation~(\ref{qterm2}). This term can be simplified by the following equation, which is based on Equation (\ref{eq1}):
\begin{gather}
\vect A_{21}\left(\vect A_{11}\left(\vectg \xi_{p}^{i}+\vect \Gamma_1 \vect e_p\right)+\left(-\vect A_{11}\vect \Gamma_1  +\vect B_1\right)\vect e_p+\vect A_{12}\vectg \eta_{p}^{i}\right)
=\sigma_i\vect A_{21}\left(\vectg \xi_{p}^{i}+\vect \Gamma_1 \vect e_p\right)
=\sigma_i\left(\vect A_{21}\vectg \xi_{p}^{i}+\vect B_2\vect e_p\right)=\vect 0
\end{gather}
Multiply by $(\vect A_{21}\vect A_{12})^{-1}$:
\begin{gather}
\begin{split}
\left(\vect A_{21}\vect A_{12}\right)^{-1}\vect A_{21}\vect A_{11}\left(\vectg \xi_{p}^{i}+\vect \Gamma_1 \vect e_p\right)
&=-\left(\vect A_{21}\vect A_{12}\right)^{-1}\vect A_{21}\left(-\vect A_{11}\vect \Gamma_1  +\vect B_1\right)\vect e_p-\left(\vect A_{21}\vect A_{12}\right)^{-1}\vect A_{21}\vect A_{12}\vectg \eta_{p}^{i}\\
&=-\vect \Gamma_2\vect e_p-\vectg \eta_{p}^{i}
\end{split} \label{qterm25}
\end{gather}
By Equations (\ref{qterm1}), (\ref{qterm2}), and (\ref{qterm25}), Equation (\ref{eq21}) becomes:
\begin{gather}
\begin{split}
 \vect 0=\vect A_{11}\vect x_1+\vect A_{12}\vect x_2-2\sigma_i \vect x_1+&\frac12  \bigl(\tensor H_{11}\left(\vectg \xi_{p}^{i}, \vectg \xi_{q}^{i}\right)
  +\tensor H_{12}\left(\vectg \xi_{p}^{i}, \vectg \eta_{q}^{i}\right)
  +\tensor N_1\left(\vectg \xi_{p}^{i}\right)\vect e_q
  +\tensor N_2\left(\vectg \eta_{p}^{i}\right)\vect e_q
  +\tensor D_1\left(\vect e_p, \vect e_q\right)\\
  &\ +\tensor H_{11}\left(\vectg \xi_{q}^{i}, \vectg \xi_{p}^{i}\right)
  +\tensor H_{12}\left(\vectg \xi_{q}^{i}, \vectg \eta_{p}^{i}\right)
  +\tensor N_1\left(\vectg \xi_{q}^{i}\right)\vect e_p
  +\tensor N_2\left(\vectg \eta_{q}^{i}\right)\vect e_p
  +\tensor D_1\left(\vect e_q, \vect e_p\right)\bigr)\\
\end{split}
\end{gather}
And similarly, Equation (\ref{eq31}) becomes:
\begin{gather}
 \vect 0=\vect A_{11}\vect x_1+\vect A_{12}\vect x_2-2\sigma_i \vect x_1
 +\frac12\left(-\tensor H_{12}\left(\vectg \xi_{p}^{i}, \vect \Gamma_3\vect e_{q}\right)
  +\tensor N_2\left(\vect \Gamma_3\vect e_{q}\right)\vect e_p\right)
\end{gather}

$\vect \Gamma_1$ and $\vect \Gamma_3$ is need to be used for the calculation, and could be solved by the following equations:
%\begin{subequations}
\begin{gather}
	\vect A_{21}\vect A_{12}\vect \Gamma_3\vect e_p=\vect B_2\vect e_p\\
	\vect \Gamma_1=\vect A_{12}\vect \Gamma_3
\end{gather}
%\end{subequations}
Algorithm \ref{alg1} summarizes the  procedure of the construction of Krylov subspaces of quadratic-bilinear DAEs.
\begin{algorithm}[H]
\caption{Krylov subspaces construction of quadratic-bilinear DAEs}\label{alg1}
\begin{algorithmic}[1] 
    \State Solve equation $\vect A_{21}\vect A_{12}\vect \Gamma_3\vect e_p=\vect B_2\vect e_p$ for $\vect \Gamma_3\vect e_p$, the $p$th column for $\vect \Gamma_3$, calculate $\vect \Gamma_1$ by $\vect \Gamma_1=\vect A_{12}\vect \Gamma_3$.
    
    \State Solve equations for $\vect x_1$ and $\vect x_2$ to obtain $\vectg \xi_p^i$ and $\vectg \eta_p^i$, for all $1\leq p \leq m$, and all interpolation points $\sigma_i$:
       
    \leavevmode$\begin{aligned}
 		&\vect 0=\vect A_{11}\vect x_1+\vect A_{12}\vect x_2
  		+\vect B_1\vect e_p-\sigma_i\vect\Gamma_1\vect e_p-\sigma_i \vect x_1\\
 		&\vect 0=\vect A_{21}\vect x_1+\vect B_2\vect e_p
	\end{aligned}$

	\State Solve equations for $\vect x_1$ and $\vect x_2$ to obtain $\vectg \xi_{pq}^i$ and $\vectg \eta_{pq}^i$, for all $1\leq p \leq q\leq m$ and all interpolation points $\sigma_i$:
	
    \leavevmode$\begin{aligned}
 		&\vect 0=\vect A_{11}\vect x_1+\vect A_{12}\vect x_2-2\sigma_i \vect x_1+\frac12 \bigl(\tensor H_{11}\left(\vectg \xi_{p}^{i}, \vectg \xi_{q}^{i}\right)
  		+\tensor H_{12}\left(\vectg \xi_{p}^{i}, \vectg \eta_{q}^{i}\right)
  		+\tensor N_1\left(\vectg \xi_{p}^{i}\right)\vect e_q
  		+\tensor N_2\left(\vectg \eta_{p}^{i}\right)\vect e_q
  		+\tensor D_1\left(\vect e_p, \vect e_q\right)\\
  		&\qquad\qquad\qquad\qquad\qquad\qquad\qquad+\tensor H_{11}\left(\vectg \xi_{q}^{i}, \vectg \xi_{p}^{i}\right)
  		+\tensor H_{12}\left(\vectg \xi_{q}^{i}, \vectg \eta_{p}^{i}\right)
  		+\tensor N_1\left(\vectg \xi_{q}^{i}\right)\vect e_p
  		+\tensor N_2\left(\vectg \eta_{q}^{i}\right)\vect e_p
  		+\tensor D_1\left(\vect e_q, \vect e_p\right)\bigr)\\
 		&\vect 0=\vect A_{21}\vect x_1
	\end{aligned}$

	\State Solve the equations for $\vect x_1$ and $\vect x_2$ to obtain $\vectg \xi_{p(q+m)}^i$ and $\vectg \eta_{p(q+m)}^i$, for all $1\leq p,q\leq m$ and all interpolation points $\sigma_i$:
	
    \leavevmode$\begin{aligned}
 		&\vect 0=\vect A_{11}\vect x_1+\vect A_{12}\vect x_2-2\sigma_i \vect x_1
 		+\frac12\left(-\tensor H_{12}\left(\vectg \xi_{p}^{i}, \vect \Gamma_3\vect e_{q}\right)
  		+\tensor N_2\left(\vect \Gamma_3\vect e_{q}\right)\vect e_p\right)\\
 		&\vect 0=\vect A_{21}\vect x_1
	\end{aligned}$
	
	\State Construct Krylov subspaces:
	
	\leavevmode$\begin{aligned}
 	\mathcal{W}_p=
 	\sum_{i=1}^r\operatorname{span}
 	\left\{\vectg \xi_{p}^{i}+\vect \Gamma_1 \vect e_p,\vectg \xi_{pp}^{i}\right\},\quad
 	\mathcal{W}_{pq}=
  	\sum_{i=1}^r\operatorname{span}
  	\left\{\vectg \xi_{pq}^{i}\right\}
	\end{aligned}$. 
	
	\State Use the Gram-Schmidt process with respect to the $\vect G$-inner product to obtain $\vect G$-orthonormal basis matrices $\vect W_p$ and $\vect W_{pq}$ for the Krylov subspaces.
\end{algorithmic}
\end{algorithm}

In many large-scale problems, such as CFD systems, the system matrices and tensors are difficult to calculate and store. Instead, the discretized equations are typically solved iteratively through local coupling between neighboring unknowns. The proposed algorithm is well suited to such applications. Throughout Algorithm \ref{alg1}, the system matrices and tensors need not be explicitly assembled or stored. Instead, the algorithm only requires repeatedly solving the same linearized system with different source terms, which can be solved using the native discretization schemes and solution procedures of a CFD solver.

The ROM matrices and tensors can be calculated by Equations (\ref{qterm1}), (\ref{qterm2}) and (\ref{rommatrix1})--(\ref{rommatrix3}). To evaluate Equation~(\ref{qterm2}), the term $(\vect A_{21}\vect A_{12})^{-1}\vect A_{21}\vect A_{11}\vect w$ needs to be computed, where $\vect w$ denotes a column of either $\vect W_p$ or $\vect W_{pq}$. This term can be obtained by solving the following equation for $\vectg\theta$:
	\begin{gather}
	\vect A_{21}\vect A_{12}\vectg \theta=\vect A_{21}\vect A_{11}\vect w
	\end{gather}
Similarly, $\vect \Gamma_2$ could be obtained by solve equation:
\begin{gather}
 	\vect A_{21}\vect A_{12}\vect \Gamma_2\vect e_p=\vect A_{21}\left(-\vect A_{11}\vect \Gamma_1 +\vect B_1\right)\vect e_p
\end{gather}
The procedure of the calculation of the ROM matrices and tensors are summarized in Algorithm \ref{alg2}.
\begin{algorithm}[H]
\caption{ROM matrices and tensors calculation of quadratic-bilinear DAEs}\label{alg2}
\begin{algorithmic}[1] 
    \State Solve equation $\vect A_{21}\vect A_{12}\vectg \theta=\vect A_{21}\vect A_{11}\vect w$ for $\vectg\theta$, for $\vect w$ as a column of either $\vect W_p$ or $\vect W_{pq}$.
    
    \State Solve equation $\vect A_{21}\vect A_{12}\vect \Gamma_2\vect e_p=\vect A_{21}(-\vect A_{11}\vect \Gamma_1 +\vect B_1)\vect e_p$ for $\vect \Gamma_2\vect e_p$, the $p$th column for $\vect \Gamma_2$
    
    \State Calculate the vector results of the following matrices or tensors, for $\vect w$ and $\vect w'$ as a column of either $\vect W_p$ or $\vect W_{pq}$, and $\vectg\theta$ and $\vectg\theta'$ as the results by Step 1 of this algorithm, corresponds to $\vect w$ and $\vect w'$, respectively:
    
\leavevmode$\begin{aligned}
&\tensor H\left(\vect w,\vect w'\right)
  =\tensor H_{11}\left(\vect w,\vect w'\right)
  -\tensor H_{12}\left(\vect w,\vectg \theta'\right)\\
&\tensor N\left(\vect w\right) \vect e_p
  =-\tensor H_{11}\left(\vect \Gamma_1 \vect e_p, \vect w\right)
  -\tensor H_{11}\left(\vect w, \vect \Gamma_1 \vect e_p\right)
  +\tensor N_1\left(\vect w\right)\vect e_p
  -\tensor H_{12}\left(\vect w,\vectg \Gamma_2 \vect e_p\right)
  +\tensor H_{12}\left(\vect \Gamma_1 \vect e_p, \vectg \theta\right)
  -\tensor N_2\left(\vectg \theta\right)\vect e_p\\
&\tensor M\left(\vect w\right) \vect e_p
  =-\tensor H_{12}\left(\vect w,
  \vect \Gamma_3 \vect e_p\right)\\
&\tensor D\left(\vect e_p,\vect e_q\right)
  =\tensor H_{11}\left(
  \vect \Gamma_1 \vect e_p,\vect \Gamma_1 \vect e_q\right)
  -\tensor N_1\left(\vect \Gamma_1 \vect e_p\right)\vect e_q 
  +\tensor D_1\left(\vect e_p, \vect e_q\right)
  +\tensor H_{12}\left(\vect \Gamma_1 \vect e_p, \vectg \Gamma_2 \vect e_q\right)
  -\tensor N_2\left(\vectg \Gamma_2 \vect e_p\right)\vect e_q\\
&\tensor K\left(\vect e_p,\vect e_q\right)
  =\tensor H_{12}\left(\vect \Gamma_1 \vect e_p, \vect \Gamma_3 \vect e_q\right)
  -\tensor N_2\left(\vect \Gamma_3 \vect e_q\right)\vect e_p
\end{aligned}$
	\State Calculate the ROM matrices and tensors by the vector results of Step 3, using Equations (\ref{rommatrix1})--(\ref{rommatrix3}).
  	\end{algorithmic}
\end{algorithm}

In the procedure of Algorithm \ref{alg2}, the first two steps can be solved with the native discretization scheme and solution procedure of a CFD solver. Step 3 only requires evaluating the discretized operators using the solutions obtained from Algorithm \ref{alg1}, which can also be calculated by the native discretization scheme of a CFD solver. Finally, Step 4 involves only the computation of several inner products. Thus, the ROM matrices and tensors can be constructed without the direct calculation of the system matrices and tensors.

\subsection{Inner product for ROM matrices calculation of CFD problems}\label{G_FVM}
Step 4 of Algorithm \ref{alg2} involves the calculation of inner product $\langle \cdot,\cdot \rangle_{\vect G}$, where the $\vect G$ must be chosen such that the operator $\vect \Pi$ is self-adjoint with respect to the inner product. In particular, for CFD problems, $\vect A_{12}$ and $\vect A_{21}$ typically correspond to, or are closely related to, the discrete gradient operator $\mathcal G$ and divergence operator $\mathcal D$, respectively. Thus, in most practical cases, the self-adjointness of $\vect \Pi$ under the weighted inner product is generally equivalent to that of the operator $\mathcal G(\mathcal D\mathcal G)^{-1}\mathcal D$ under the corresponding continuous inner product. 
In the finite element method, Ern and Guermond \cite{E2004} showed that the discrete gradient and divergence operators are naturally adjoint under appropriate boundary conditions, as they are derived from the same weak formulation. The corresponding discrete inner products $\langle \cdot,\cdot \rangle_{\vect G}$ are naturally induced by the finite element mass matrices. 
However, in the finite volume method (FVM), the adjoint relation is generally not preserved under the Euclidean inner product. This section presents the construction of the matrix $\vect G$ for CFD problems using FVM. First, we prove the following proposition. 

\begin{prop}\label{adjoint}
Let $\mathcal G$ and $\mathcal D$ denote the discrete gradient and divergence operators obtained by the FVM, associated with the boundary face values $p_f$ and $\vecf v_f$. Define the volume weighted inner products to be $\langle \vecf u,\vecf v\rangle_{\vecf v}:=\sum_KV_K\vecf u_K\cdot\vecf v_K$, where $K$ loops the cell in $\Omega$ and $V_k$ is the volume of the cell $K$. Assume that the Green-Gauss cell-based method is used in the discretization. Furthermore, assume that, for every boundary face $f\in\partial K\cap\partial\Omega$:
\begin{gather}\label{condition}
\left(p_f\vecf v_K+p_K\vecf v_f-p_K\vecf v_K\right)\cdot\vecf n_{Kf}=\vecf 0
\end{gather}
Where cell $K$ is adjacent to face $f$, and $\vecf n_{Kf}$ is the outward unit normal vector to face $f$ with respect to cell $K$. If $\mathcal D\mathcal G$ is invertible, then the discrete operator $\mathcal P:=\mathcal G(\mathcal D\mathcal G)^{-1}\mathcal D$ is self-adjoint with respect to $\langle \cdot,\cdot\rangle_{\vecf v}$.
\end{prop}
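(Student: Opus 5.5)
The plan is to reduce self-adjointness of $\mathcal P$ to a discrete integration-by-parts identity, namely that $\mathcal G$ and $-\mathcal D$ are mutually adjoint with respect to volume-weighted inner products. Concretely, I will show that for any cell-centred scalar field $p$ and vector field $\vecf v$,
\begin{gather*}
\left\langle \mathcal G p,\vecf v\right\rangle_{\vecf v}=-\sum_K V_K\,p_K\,(\mathcal D\vecf v)_K .
\end{gather*}
Write $\langle p,q\rangle_{V}:=\sum_K V_K p_K q_K$ for the analogous scalar inner product, so the identity reads $\mathcal G^{*}=-\mathcal D$ with the adjoint taken between $\langle\cdot,\cdot\rangle_{V}$ and $\langle\cdot,\cdot\rangle_{\vecf v}$. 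Then for any $\vecf u,\vecf v$,
\begin{gather*}
\left\langle \mathcal P\vecf u,\vecf v\right\rangle_{\vecf v}
=\left\langle \mathcal G(\mathcal D\mathcal G)^{-1}\mathcal D\vecf u,\vecf v\right\rangle_{\vecf v}
=-\left\langle (\mathcal D\mathcal G)^{-1}\mathcal D\vecf u,\mathcal D\vecf v\right\rangle_{V}.
\end{gather*}
The operator $\mathcal D\mathcal G=-\mathcal G^{*}\mathcal G$ is self-adjoint and negative semidefinite in $\langle\cdot,\cdot\rangle_V$, and it is invertible by hypothesis. Hence $(\mathcal D\mathcal G)^{-1}$ is self-adjoint, the last expression is symmetric in $\vecf u,\vecf v$, and so $\mathcal P$ is self-adjoint.

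For the adjoint identity, I use the Green--Gauss cell-based definitions: $V_K(\mathcal G p)_K=\sum_{f\in\partial K} p_f A_f\vecf n_{Kf}$ and $V_K(\mathcal D\vecf v)_K=\sum_{f\in\partial K}\vecf v_f\cdot A_f\vecf n_{Kf}$. On an interior face $f$ shared by cells $K$ and $L$ the face value is a linear interpolation, say $p_f=w_fp_K+(1-w_f)p_L$, and similarly for $\vecf v_f$. Expanding both sides as sums over faces gives
\begin{gather*}
\sum_K\sum_{f\in\partial K} A_f\left(p_f\vecf v_K+p_K\vecf v_f\right)\cdot\vecf n_{Kf}=0 .
\end{gather*}
The interior and boundary faces are handled separately.

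For an interior face, $\vecf n_{Lf}=-\vecf n_{Kf}$, so the two cell contributions combine into
\begin{gather*}
A_f\vecf n_{Kf}\cdot\left[p_f(\vecf v_K-\vecf v_L)+p_K\vecf v_f-p_L\vecf v_f\right].
\end{gather*}
With the cell-based (arithmetic) weight $w_f=\tfrac12$ this becomes $\tfrac12\left[(p_K+p_L)(\vecf v_K-\vecf v_L)+(p_K-p_L)(\vecf v_K+\vecf v_L)\right]=p_K\vecf v_K-p_L\vecf v_L$. Summing over all faces, these telescoping pieces combine with the fact that $\sum_{f\in\partial K}A_f\vecf n_{Kf}=\vecf 0$ for every closed cell, so $\sum_f p_K\vecf v_K\cdot A_f\vecf n_{Kf}$ vanishes cell by cell. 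What remains on each boundary face $f\in\partial K\cap\partial\Omega$ is
\begin{gather*}
A_f\left(p_f\vecf v_K+p_K\vecf v_f-p_K\vecf v_K\right)\cdot\vecf n_{Kf},
\end{gather*}
which is zero by assumption (\ref{condition}). This is exactly why the condition has that form.

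The main obstacle is this face bookkeeping: the interior-face cancellation has to be matched correctly against the subtraction of $p_K\vecf v_K$ through $\sum_f A_f\vecf n_{Kf}=\vecf 0$, and I need to confirm that the cell-based Green--Gauss interpolation with equal weights makes it exact. If non-uniform weights $w_f$ were used, the interior terms would not telescope. I would therefore first verify the computation with equal weights, and then note that this is what the cell-based Green--Gauss method prescribes.
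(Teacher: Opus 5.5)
Your proposal is correct and follows the paper's proof essentially step for step. You derive the discrete summation-by-parts identity $\langle \vecf v,\mathcal G p\rangle_{\vecf v}=-\langle p,\mathcal D\vecf v\rangle_p$ from the arithmetic face averages (interior faces reducing to $(p_K\vecf v_K-p_L\vecf v_L)\cdot\vecf n_{Kf}$), the closed-cell identity $\sum_{f\in\partial K}A_f\vecf n_{Kf}=\vecf 0$, and the boundary assumption, and then pass through the self-adjointness of $\mathcal D\mathcal G$ and of its inverse; your added remark that unequal interpolation weights would break the cancellation is a valid caveat that the paper sidesteps by fixing the equal-weight cell-based average.
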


\begin{proof}
The discrete gradient and divergence operators by FVM is:
\begin{gather}
	\left(\mathcal G p\right)_K=\frac{1}{V_K}\sum_{f\in \partial K}A_fp_f\vecf n_{Kf},\quad
	\left(\mathcal D \vecf v\right)_K=\frac{1}{V_K}\sum_{f\in \partial K}A_f\vecf v_f\cdot\vecf n_{Kf}
\end{gather}
Where $V_K$ is the volume of the cell $K$, $A_f$ is the area of the face $f$, and $\vecf n_{Kf}$ is the outward unit normal vector to face $f$ with respect to cell $K$. For an interior face $f\in\mathcal F_{\mathrm{int}}$ shared by cells $K$ and $L$, the Green–Gauss cell-based method gives:
\begin{gather}\label{bcd}
	\vecf v_f=\frac{\vecf v_K+\vecf v_L}{2},\quad
	p_f=\frac{p_K+p_L}{2}	
\end{gather}
Define a volume weighted inner products of $p$ to be $\langle p,q \rangle_{p} :=\sum_KV_Kp_Kq_K$, then:
\begin{gather}
\left\langle \vecf v,\mathcal Gp\right\rangle_{\vecf v}=\sum_K\sum_{f\in \partial K}A_fp_f\vecf v_K\cdot\vecf n_{Kf}
=\sum_{f\in \mathcal F_{\mathrm{int}}}A_fp_f\left(\vecf v_K\cdot\vecf n_{Kf}+\vecf v_L\cdot\vecf n_{Lf}\right)+\sum_{f\in \partial \Omega}A_fp_f\vecf v_K\cdot\vecf n_{Kf}	\\
\left\langle p,\mathcal D \vecf v\right\rangle_{p}=\sum_K\sum_{f\in \partial K}A_fp_K\vecf v_f\cdot\vecf n_{Kf}=\sum_{f\in \mathcal F_{\mathrm{int}}}A_f\left(p_K\vecf v_f\cdot\vecf n_{Kf}+p_L\vecf v_f\cdot\vecf n_{Lf}\right)+\sum_{f\in \partial \Omega}A_fp_K\vecf v_f\cdot\vecf n_{Kf}
\end{gather}
where K and L are the two cells sharing the interior face $f$ in the second step of two equations. $\vecf n_{Kf}$ and $\vecf n_{Lf}$ is unit normal vector of one face $f$, but in different direction, thus $\vecf n_{Kf}=-\vecf n_{Lf}$. Then:
\begin{gather}
	\left\langle \vecf v,\mathcal Gp\right\rangle_{\vecf v}+\left\langle p,\mathcal D \vecf v\right\rangle_{p}=\sum_{f\in \mathcal F_{\mathrm{int}}}A_f\left(p_f\left(\vecf v_K-\vecf v_L\right)+\left(p_K-p_L\right)\vecf v_f\right)\cdot\vecf n_{Kf}+\sum_{f\in \partial \Omega}A_f\left(p_f\vecf v_K+p_K\vecf v_f\right)\cdot\vecf n_{Kf}	
\end{gather}
Using Equation (\ref{bcd}) gives:
\begin{gather}
\begin{split}
	&\left\langle \vecf v,\mathcal Gp\right\rangle_{\vecf v}+\left\langle p,\mathcal D \vecf v\right\rangle_{p}\\
	=&\sum_{f\in \mathcal F_{\mathrm{int}}}A_f\left(p_K\vecf v_K-p_L\vecf v_L\right)\cdot\vecf n_{Kf}+\sum_{f\in \partial \Omega}A_fp_K\vecf v_K\cdot\vecf n_{Kf}+\sum_{f\in \partial \Omega}A_f\left(p_f\vecf v_K+p_K\vecf v_f-p_K\vecf v_K\right)\cdot\vecf n_{Kf}
\end{split}	
\end{gather}
The third term vanishes by assumption and the remaining two terms of the equation can be written as:
\begin{gather}
 	\sum_K\sum_{f\in \partial K}A_fp_K\vecf v_K\cdot\vecf n_{Kf}=\sum_Kp_K\vecf v_K\cdot\sum_{f\in \partial K}A_f\vecf n_{Kf}
\end{gather}
By Divergence theorem:
\begin{gather}
 	\sum_{f\in \partial K}A_f\vecf n_{Kf}=\int_{\partial K}\vect ndS=\int_{K}\vect \nabla 1 dV=0
\end{gather}
Thus, $\langle \vecf v,\mathcal Gp\rangle_{\vecf v}=-\langle p,\mathcal D \vecf v\rangle_{p}$. Then,
\begin{gather}
	\left\langle p,\mathcal D\mathcal Gq\right\rangle_{p}=-\left\langle \mathcal Gp,\mathcal Gq\right\rangle_{\vecf v}=\left\langle \mathcal D\mathcal Gp,q\right\rangle_{p}
\end{gather}
Thus, $\mathcal D\mathcal G$ is self-adjoint with respect to $\langle\cdot,\cdot\rangle_p$. Since $\mathcal D\mathcal G$ is invertible by assumption, $(\mathcal D\mathcal G)^{-1}$ is also self-adjoint. Therefore:
\begin{gather}
	\left\langle \vecf u,\mathcal P\vecf v\right\rangle_{\vecf v}=-\left\langle \mathcal D\vecf u,\left(\mathcal D\mathcal G\right)^{-1}\mathcal Dv\right\rangle_{p}=-\left\langle \left(\mathcal D\mathcal G\right)^{-1}\mathcal D\vecf u,\mathcal Dv\right\rangle_{p}=\left\langle \mathcal P\vecf u,\vecf v\right\rangle_{\vecf v}
\end{gather}
Which proves the proposition.
\end{proof}

When $p_f-p_K=0$ and $\vecf u_f\cdot\vecf n_{Kf} =0$, Equation (\ref{condition}) is satisfied. This condition correspond to: 
\begin{gather}
\left.\vecf{u}\cdot \vecf n\right|_{\Gamma}=0, \quad
\left.\frac{\partial p}{\partial \vecf{n}}\right|_{\Gamma}=0
\end{gather}
Likewise, when $p_f=0$ and $(\vecf u_f-\vecf u_K)\cdot\vecf n_{Kf}=0$, Equation (\ref{condition}) is also satisfied. This condition correspond to: 
\begin{gather}
\left.\frac{\partial \vecf{u}}{\partial \vecf{n}}\cdot \vecf n\right|_{\Gamma}=0, \quad
\left.p\right|_{\Gamma}=0
\end{gather}

If, on each boundary, either of these two sets of condition is satisfied, then, according to Proposition \ref{adjoint}, the weighting matrix $\vect G$ should be chosen to be a diagonal matrix, with each diagonal entry equal to the volume of the corresponding cell, so that the $\vect \Pi$ is self-adjoint with respect to the inner product $\langle \cdot,\cdot \rangle_{\vect G}$. This inner product should be used in Step 4 of Algorithm \ref{alg2}.

\section{Numerical Experiment}\label{NumEx}

In this section, we examine the performance of the proposed ROMs for quadratic-bilinear descriptor systems using two examples of CFD problems. The transient response of the ROM is compared with the FOM by changing the input of the system from a prescribed operating point to new input values. 

\subsection{T-junction flow problem}

Junctions are widely used to combine or separate fluid flows and are encountered in a broad range of engineering applications, from gas transport in pipeline systems to microfluidic reactors \cite{S2021}. A test case of a T-junction with two inlets and one outlet is modeled in this section. The flow is assumed to be two-dimensional and incompressible. In this case, the conservation equations are:
\begin{subequations}
\begin{gather}
\vecf{\nabla} \cdot \vecf{v} =0 \label{e1C1}\\
\frac{\partial\vecf{v}}{\partial t}
	+\vecf{v}\cdot\vecf{\nabla}\vecf{v}
	=-\vecf{\nabla}p+\frac{1}{\mathrm{Re}}\Delta\vecf{v} \label{e1V1}
\end{gather}
\end{subequations}
where Re is the Reynolds number, which is set to $100$ in this case. The left and top inlet boundaries are $\Gamma_{i1}$ and $\Gamma_{i2}$, respectively. The wall boundary is $\Gamma_{w}$ and the outlet boundary is $\Gamma_{o}$. Thus, the boundary condition is:
\begin{gather}
\left.\vecf{v}\right|_{\Gamma_{i1}}=\vecf{v}_{i1}, \quad
\left.\vecf{v}\right|_{\Gamma_{i2}}=\vecf{v}_{i2}, \quad
\left.\vecf{v}\right|_{\Gamma_{w}}=0, \quad
\left.\frac{\partial\vecf{v}}{\partial \vecf{n}}\right|_{\Gamma_{o}}=0
\end{gather}
The ROM is constructed about the operating point corresponding to the steady-state solution $\vecf{v}_s$ and $p_s$, obtained with the boundary conditions $\vecf{v}_{i1}=[1,0]^T$ and $\vecf{v}_{i2}=[0,-1]^T$. Figure \ref{fig:E1f1} shows the scheme of the T-junction flow problem and the velocity magnitude at the operating point. Changes in the magnitudes of the inlet velocities at the two inlets are treated as the system inputs $\vect u = [u_1,u_2]^T$, while the flow directions remain unchanged and are always normal to the inlet boundaries. The solution is decomposed into the steady-state and fluctuating parts: $\vecf{v}=\vecf{v}_s+\vecf{v}_\delta$ and $p=p_s+p_\delta$. The equations become:
\begin{subequations}
\begin{gather}
\vecf{\nabla} \cdot \vecf{v}_\delta =0 \label{e1C2}\\
\frac{\partial\vecf{v}_\delta}{\partial t}=
	-\vecf{\nabla}p_\delta
	-\vecf{v}_s\cdot\vecf{\nabla}\vecf{v}_\delta
	-\vecf{v}_\delta\cdot\vecf{\nabla}\vecf{v}_s
	-\vecf{v}_\delta\cdot\vecf{\nabla}\vecf{v}_\delta
	+\frac{1}{\mathrm{Re}}\Delta\vecf{v}_\delta \label{e1V2}
\end{gather}
\end{subequations}
With boundary conditions:
\begin{gather}
\left.\vecf{v}_\delta\right|_{\Gamma_{i1}}=\left[u_1,0\right]^T, \quad
\left.\vecf{v}_\delta\right|_{\Gamma_{i2}}=\left[0,-u_2\right]^T, \quad
\left.\vecf{v}_\delta\right|_{\Gamma_{w}}=0, \quad
\left.\frac{\partial\vecf{v}_\delta}{\partial \vecf{n}}\right|_{\Gamma_{o}}=0
\end{gather}

\begin{figure}[H]
\begin{center}
\includegraphics[width=0.75\textwidth]{ 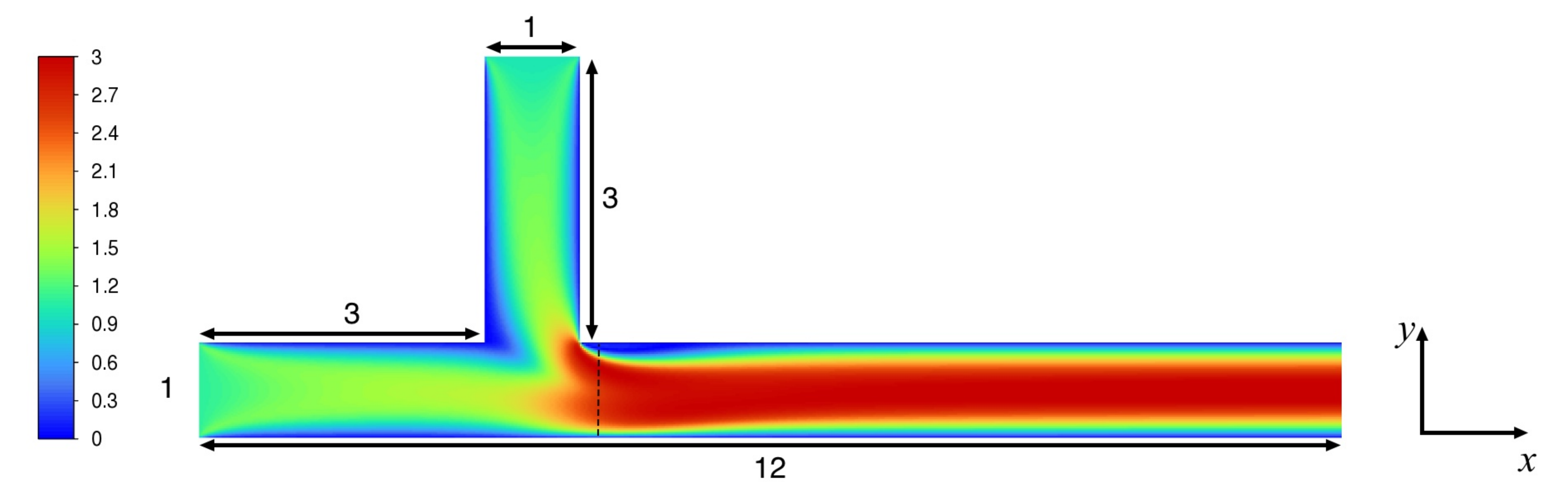}
\caption{Scheme of T-junction flow problem and the velocity magnitude at the operating point.}
\label{fig:E1f1}
\end{center}
\end{figure}

The system is discretized in ANSYS Fluent using the FVM, with the Green-Gauss cell-based method and the second-order central differencing scheme. 
%In the discretization of the deviation equations, $\vecf{v}_s\cdot\vecf{\nabla}\vecf{v}_\delta$ is treated as a convection term, with the steady-state velocity $\vecf{v}_s$ serving as the known advecting velocity.
The momentum equation is discretized into differential equations, and the continuity equation and boundary conditions become the singular part and non-singular part of algebraic equations, respectively. The algebraic equations are obviously linear, and the only nonlinear term in the differential equations is $\vecf{v}_\delta\cdot\vecf{\nabla}\vecf{v}_\delta$, which contains the quadratic term by differential and input variables at the cell adjacent to the boundary, and the quadratic term by pure differential variable at inner cell. The system of $\vect x=[\vect x_1,\vect x_2]^T$, where $\vect x_1$ and $\vect x_2$ correspond to the discretized $\vecf{v}_\delta$ and $p_\delta$ at each cell, respectively, can be written as:
\begin{subequations}
\begin{gather}
 \dot{\vect x_1}=\vect A_{11}\vect x_1+\vect A_{12}\vect x_2
  +\vect B_1\vect u
  +\tensor H_{11}\left(\vect x_1, \vect x_1\right)
  +\tensor N_1\left(\vect x_1\right)\vect u\\
 \vect 0=\vect A_{21}\vect x_1+\vect B_2\vect u
\end{gather}
\end{subequations}
where the $\vect B_1$ and $\vect B_2$ term corresponds to the source applied within the cell adjacent to the inlet boundaries by the manipulate variables, $\tensor H_{11}(\vect x_1, \vect x_1)$ corresponds to the convection of $\vecf{v}_\delta$ by itself, and $\tensor N_1(\vect x_1)\vect u$ corresponds to the convection term by the inlet boundary velocity at the cell adjacent to the inlet boundaries. 

After eliminating the boundary variables and incorporating their contributions into the source terms, $\vect A_{21}$ represents the discrete divergence operator $\mathcal D$ with no-penetration boundary conditions on the walls and inlets and homogeneous Neumann boundary conditions at the outlet, while $\vect A_{12}$ represents the negative discrete gradient operator $-\mathcal G$ with homogeneous Dirichlet boundary conditions at the outlet and homogeneous Neumann boundary conditions on the walls and inlets. Consequently, $\vect A_{21}\vect A_{12}$ corresponds to the discrete negative Laplacian operator with homogeneous Dirichlet boundary condition at the outlet and is therefore non-singular. Hence, Assumption \ref{as2} is satisfied. Furthermore, according to Proposition \ref{adjoint}, the weighting matrix $\vect G$ should be chosen as a diagonal matrix whose diagonal entries are the volumes of the corresponding cells, so that $\vect\Pi$ is self-adjoint with respect to the weighted inner product $\langle\cdot,\cdot\rangle_{\vect G}$.

$\vect \Gamma_3$ can be solved by a Laplace's equation, and $\vect \Gamma_1$ can be calculated by $\vect \Gamma_3$. The Krylov subspaces can be constructed, and the ROM matrices can be calculated based on the derivation in this study. In this case, four interpolation points, $0$, $0.01$, $0.1$, and $1$, were used to construct the ROM. Since the terms corresponding to Step 4 of Algorithm \ref{alg1} are absent, the associated Krylov subspace is empty, resulting in a 20th-order ROM. These interpolation points were selected based on the approximate time scales observed in the system response. 
The ROM was constructed based on the FOM solution on a uniform mesh with an element size of 0.02, and its transient responses were compared with the numerical simulation results obtained from the FOM using the same mesh. The mesh consists of 37,500 grid cells, resulting in 112,500 degrees of freedom corresponding to two velocity components and one pressure component in each cell.
Figure \ref{fig:E1f2} shows the input variation used to test the ROM transient response. In the first case, the inlet velocity variation was applied only to the top boundary condition, $\vecf{v}_{i2}$. In the second case, the same inlet velocity variation was applied simultaneously to the left and top boundary conditions, $\vecf{v}_{i1}$ and $\vecf{v}_{i2}$. To demonstrate the effect of the cross terms, the ROM predictions were also compared with a ROM in which the cross terms were neglected. In that case, the response was obtained by simply adding the individual responses to the variation of $\vecf{v}_{i1}$ and $\vecf{v}_{i2}$.

\begin{figure}[H]
\begin{center}
\includegraphics[width=0.31\textwidth]{ 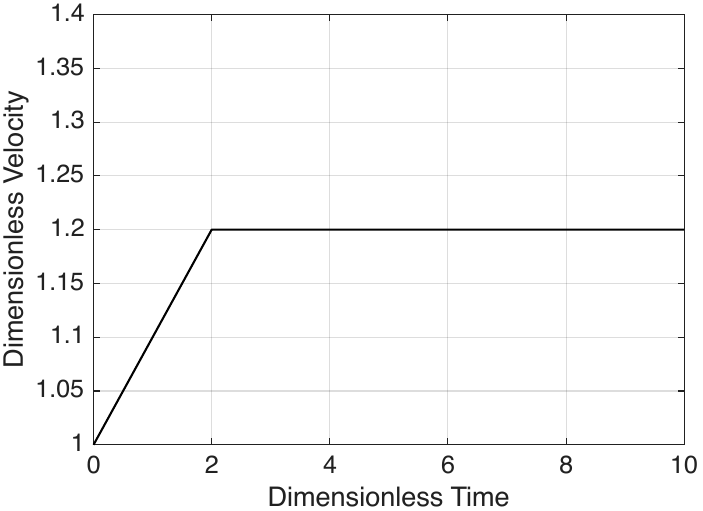}
\caption{Inlet velocity variation applied to the systems.}
\label{fig:E1f2}
\end{center}
\end{figure}

Figure \ref{fig:E1f3} shows the relative $L^2$ error as a function of time of the three cases, which is defined as the following: 
\begin{gather}
	e\left(t\right)=\frac{\left\|\hat{\vecf v}_\delta\left(t\right)-\vecf v_\delta\left(t\right)\right\|_{L^2}}{\left\|\vecf v_\delta\left(t\right)\right\|_{L^2}}
\end{gather}
where $\hat{\vecf v}_\delta$ denotes the $\vecf v_\delta$ predicted by the ROM. For the first two cases, the relative $L^2$ error is larger at the beginning of the simulation and gradually decreases with time. The relatively large initial error is because the response contains high-frequency dynamics at the beginning due to the input variation, which is difficult for the ROM to capture. And the gradual decrease of the error shows that the ROM accurately captures the dominant low-frequency dynamics. For the third case, which uses the same input variation as the second case but neglects the cross terms, the error is relatively higher throughout the simulation. This demonstrates that the Krylov subspaces associated with the cross terms successfully capture the second-order coupling effects between the two inputs.

\begin{figure}[H]
\begin{center}
\includegraphics[width=0.36\textwidth]{ 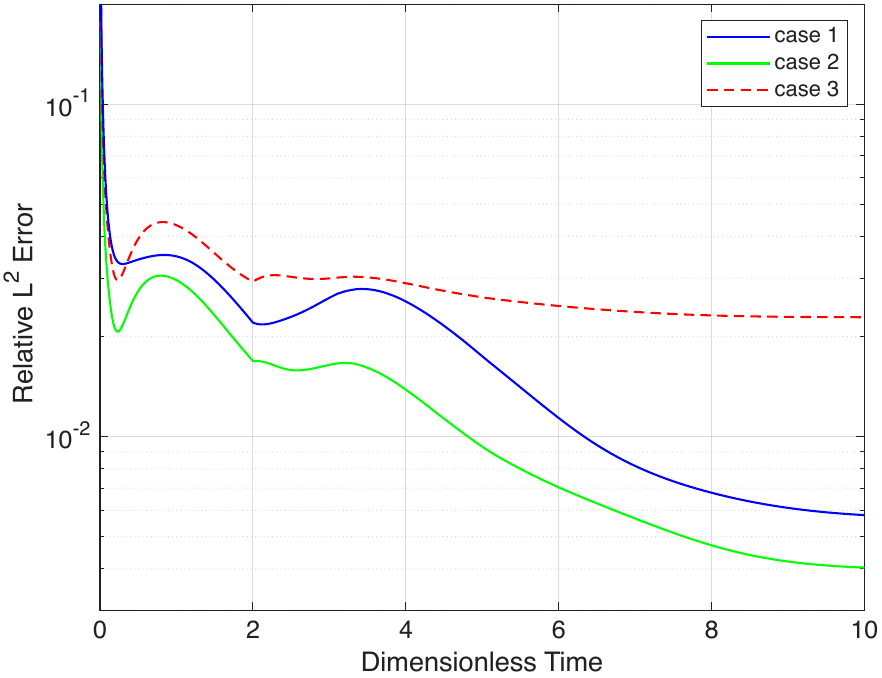}
\caption{Relative $L^2$ error as a function of time for three cases: (1) variation in $\vecf{v}_{i2}$ only; (2) simultaneous variations in $\vecf{v}_{i1}$ and $\vecf{v}_{i2}$; and (3) simultaneous variations in $\vecf{v}_{i1}$ and $\vecf{v}_{i2}$ without the cross terms.}
\label{fig:E1f3}
\end{center}
\end{figure}

Furthermore, $\vecf v_\delta$ and $\hat{\vecf{v}}_\delta$ at several points on the dashed line in Figure \ref{fig:E1f1} as a function of time is plotted for the three cases. This line is selected because it is in the region where the flow from two inputs meets, where there would be significant coupling effects. Six points, denoted by $a$ to $f$, are chosen on the dashed line. Their distances from the upper wall are 0.1, 0.2, 0.3, 0.5, 0.7, and 0.9, respectively. As shown in Figure \ref{fig:E1f4}, the $\hat{\vecf{v}}_\delta$ agrees well with $\vecf v_\delta$ for the first two cases throughout the transient process, and both converge to nearly identical steady-state solutions. In contrast, for the third case, the two models do not even converge to similar steady-state solutions, especially for points c and e. These results demonstrate the importance of the Krylov subspaces associated with the cross terms and show that the proposed ROM successfully captures the second-order coupling effects between the two inputs.

\begin{figure}[H]
\begin{center}
\includegraphics[width=0.7\textwidth]{ 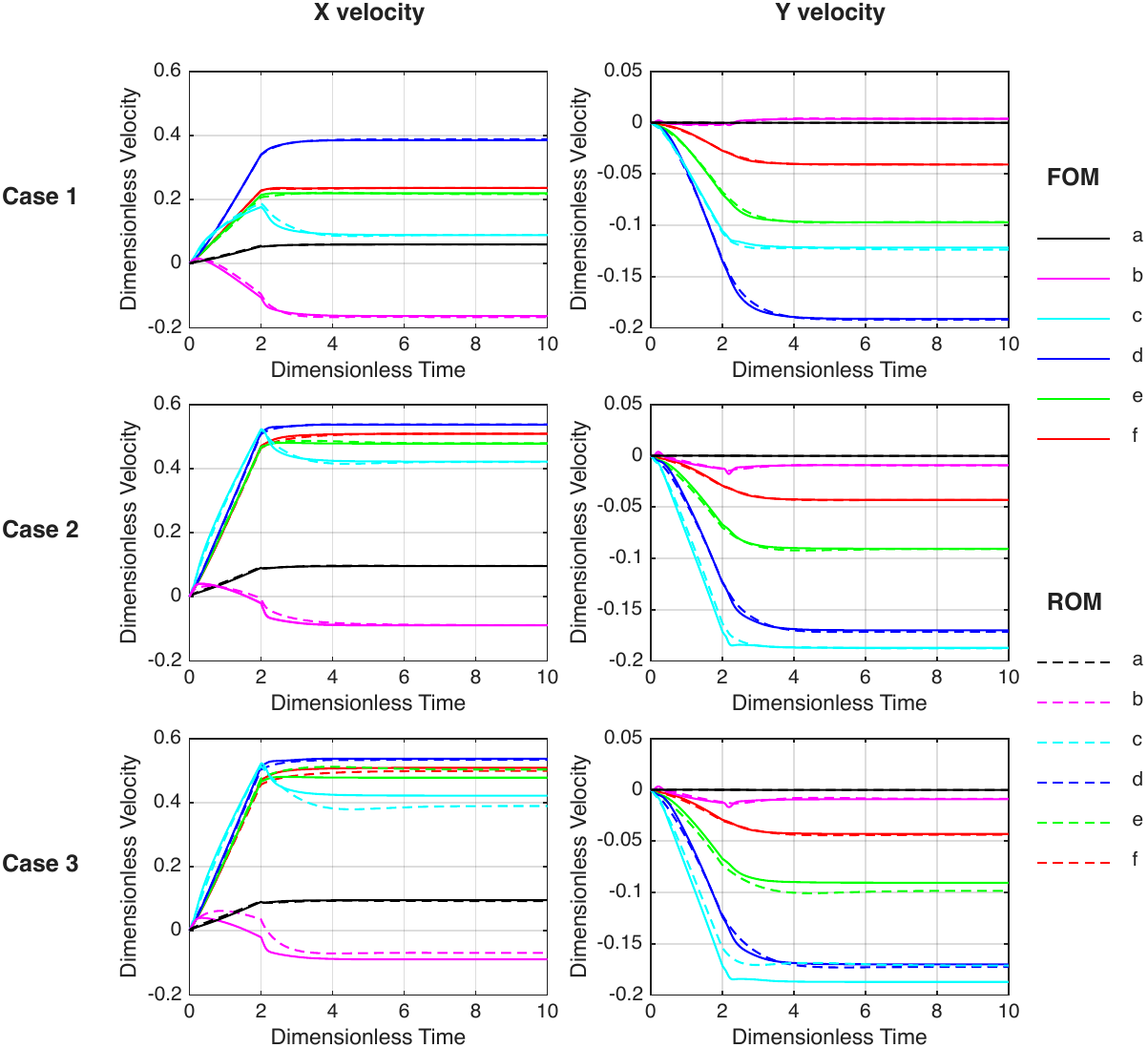}
\caption{Comparison of $\vecf v_\delta$ and $\hat{\vecf{v}}_\delta$ at the points on the dashed line in Figure \ref{fig:E1f1} as a function of time for three cases: (1) variation in $\vecf{v}_{i2}$ only; (2) simultaneous variations in $\vecf{v}_{i1}$ and $\vecf{v}_{i2}$; and (3) simultaneous variations in $\vecf{v}_{i1}$ and $\vecf{v}_{i2}$ without the cross terms.}
\label{fig:E1f4}
\end{center}
\end{figure}

Figure \ref{fig:E1f5} compares the magnitude of ${\vecf{v}_\delta}$ and $\hat{\vecf{v}}_\delta$, together with the corresponding absolute error $|\hat{\vecf{v}}_\delta-{\vecf v_\delta}|$, for the case with simultaneous variations in $\vecf{v}_{i1}$ and $\vecf{v}_{i2}$. The FOM and ROM predictions are visually indistinguishable, and the absolute error contour indicates that the maximum absolute error is approximately $1\%$ of the maximum magnitude of ${\vecf{v}_\delta}$. The largest errors occur near the upper wall in the flow interaction region, where the coupling between the two inlet flows is strongest, and the local flow gradients are the largest.

\begin{figure}[H]
\begin{center}
\includegraphics[width=0.72\textwidth]{ 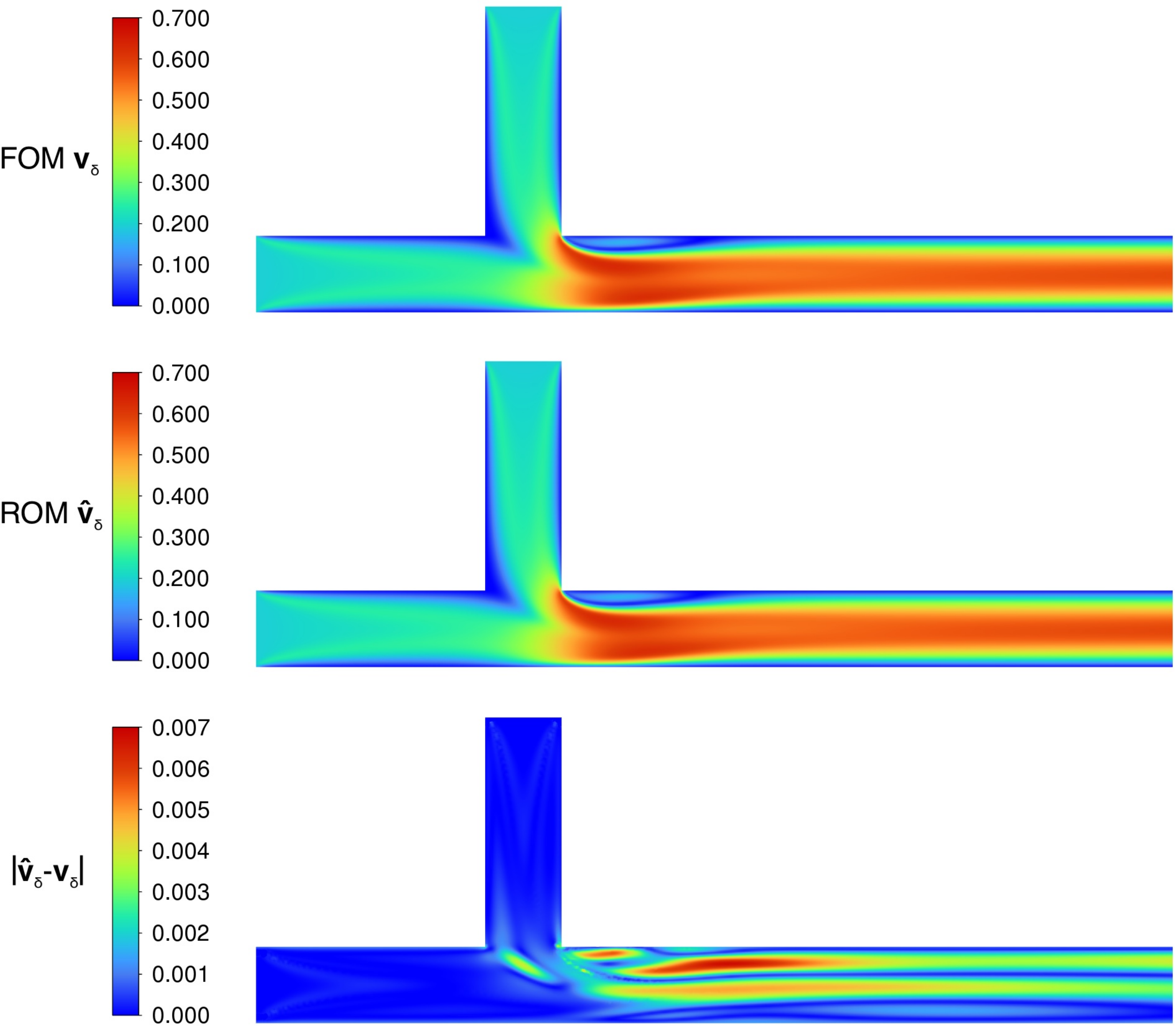}
\caption{Comparison of the magnitude of ${\vecf{v}_\delta}$ and $\hat{\vecf{v}}_\delta$, together with the corresponding absolute error $|\hat{\vecf{v}}_\delta-{\vecf v_\delta}|$, for the case with simultaneous variations in $\vecf{v}_{i1}$ and $\vecf{v}_{i2}$.}\label{fig:E1f5}
\end{center}
\end{figure}

\subsection{Fluid flow problem coupled with heat transfer}

For the second example, we consider a fluid flow problem coupled with heat transfer. The velocity field transports thermal energy through forced convection, while the temperature field affects the density, which is coupled back to the flow field through the governing equations. Figure \ref{fig:E2f1} shows the schematic diagram of the problem, which includes a volumetric source term $f$ in the $y$-momentum equation and a volumetric energy source $q$. The example is inspired by a conveyor convection oven and can be readily extended to other coupled fluid flow and heat transfer systems, including HVAC systems and heat exchangers. The left and right channels represent the entrance and exit of a conveyor convection oven, respectively. In this example, they are modeled as an air inlet and an air outlet. The flow is assumed to be two-dimensional and compressible. The computational domain is chosen to be much smaller than that of a real oven because turbulence is not considered in the present study. A smaller characteristic length is needed to maintain a low Reynolds number to ensure laminar flow.
In this case, the conservation equations are:
\begin{subequations}\label{VCT1}
\begin{gather}
\frac{\partial \rho}{\partial t}
	+\vecf{\nabla} \cdot \left(\rho \vecf{v}\right)
	=0 \label{C1}\\
\frac{\partial\rho\vecf{v}}{\partial t}
	+\vecf{\nabla}\cdot\left(\rho\vecf{v}\vecf{v}\right)
	=-\vecf{\nabla}p+\mu\Delta\vecf{v}
	+\frac{\mu}{3}\vecf\nabla\left(\vecf\nabla\cdot\vecf{v}\right)
	+\rho\vecf{g}+\vecf{f} \label{V1}\\
\frac{\partial\rho C_p T}{\partial t}
	+\vecf{\nabla}\cdot\left(\rho C_p \vecf{v}T\right)
	=k \Delta T+q\label{T1}
\end{gather}
\end{subequations}

\begin{figure}[H]
\begin{center}
\includegraphics[width=0.6\textwidth]{ 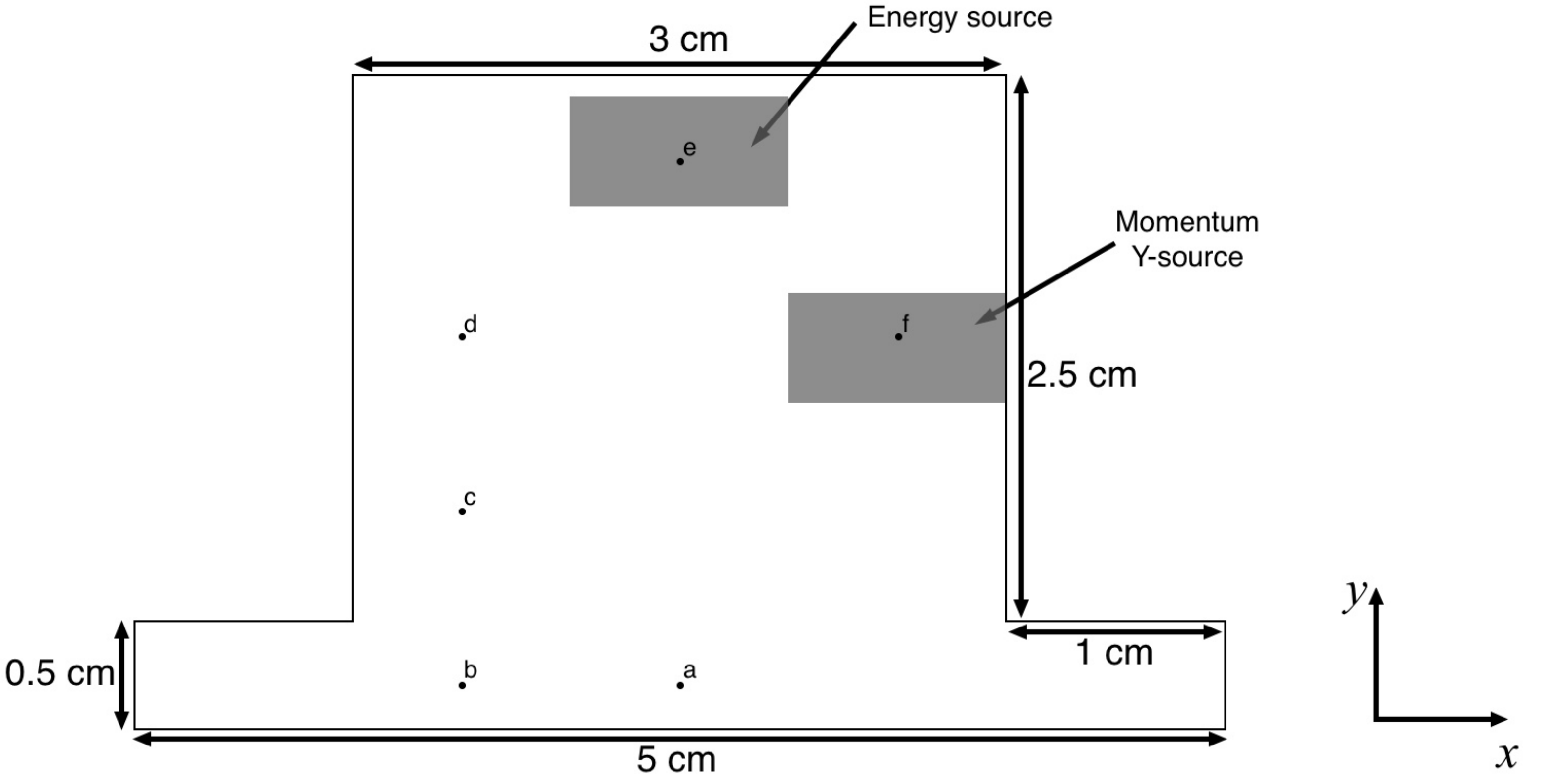}
\caption{Scheme of fluid flow problem coupled with heat transfer.}
\label{fig:E2f1}
\end{center}
\end{figure}

We assume that, because of the small pressure variation, the incompressible ideal-gas approximation can be used to simplify the conservation equations:
\begin{gather}
	\rho T=\text{const}=\rho_0T_0 \label{CE}
\end{gather}
Subtracting Equation (\ref{C1}) from Equation (\ref{V1}), dividing by $\rho$, and using Equation (\ref{CE}) gives:
\begin{subequations}\label{VCT2}
\begin{gather}
\frac{\partial\vecf{v}}{\partial t}
	+\vecf{v}\cdot\vecf{\nabla}\vecf{v}
	=\frac{T}{\rho_0 T_0}\left(-\vecf{\nabla}p
	+\mu\Delta\vecf{v}
	+\frac{\mu}{3}\vecf\nabla\left(\vecf\nabla\cdot\vecf{v}\right)+\vecf{f}\right)
	+\vecf{g} \label{V2}
\end{gather}
Similarly, subtracting Equation (\ref{C1}) from Equation (\ref{T1}) gives:
\begin{gather}
\frac{\partial T}{\partial t}
	+\vecf{v}\cdot\vecf{\nabla}T=
	\frac{T}{\rho_0 T_0 C_p}\left(k\Delta T+q \right) \label{T2}
\end{gather}
Since $\rho T$ is constant, Equation (\ref{T1}) becomes:
\begin{gather}
\vecf{\nabla}\cdot \vecf{v}
	-\frac{k}{\rho_0 T_0 C_p}\Delta T
	=\frac{q}{\rho_0 T_0 C_p} \label{C2}
\end{gather}
\end{subequations}

The ROM is constructed about the operating point corresponding to the steady-state solution with $\vecf{f}=\vecf{f}_s$ and ${q}={q}_s$. The operating-point values of $\vecf{f}_s$ and $q_s$ are $4.5~\mathrm{N/m^3}$ and $80~\mathrm{kW/m^3}$, respectively. The corresponding steady-state solution is shown in Figure \ref{fig:E2fss}. The deviation of $\vecf{f}$ and ${q}$, denoted as $\vecf{f}_\delta$ and ${q}_\delta$, are regarded as the input to the system. Decomposing $\vecf{v}=\vecf{v}_s+\vecf{v}_\delta$, $T=T_s+T_\delta$, and $p=p_s+p_\delta$, Equations (\ref{VCT2}) becomes, as equations of $(\vecf{v}_\delta, T_\delta, p_\delta)$:
\begin{subequations}\label{VCT3}
\begin{gather}
\vecf{\nabla}\cdot \vecf{v}_\delta
	-\frac{k}{\rho_0 T_0 C_p}\Delta T_\delta
	=\frac{q_\delta}{\rho_0 T_0 C_p} \label{C3}\\
\begin{split}
\frac{\partial\vecf{v}_\delta}{\partial t}
	+\vecf{v}_s\cdot\vecf{\nabla}\vecf{v}_\delta
	+\vecf{v}_\delta\cdot\vecf{\nabla}\vecf{v}_s&=
	\frac{T_s}{\rho_0 T_0}\left(-\vecf{\nabla}p_\delta
	+\mu\Delta\vecf{v}_\delta+\frac{\mu}{3}\vecf\nabla\left(\vecf\nabla\cdot\vecf{v}_\delta\right)
	+\vecf{f}_\delta\right)\\
	&+\frac{T_\delta}{\rho_0 T_0}\left(-\vecf{\nabla}p_s
	+\mu\Delta\vecf{v}_s+\frac{\mu}{3}\vecf\nabla\left(\vecf\nabla\cdot\vecf{v}_s\right)
	+\vecf{f}_s\right) 
	+ \vecf{\mathcal F_v}\left(\vecf{v}_\delta, T_\delta, p_\delta, \vecf{f}_\delta\right)\label{V3}
\end{split}\\
\frac{\partial T_\delta}{\partial t}
	+\vecf{v}_s\cdot\vecf{\nabla}T_\delta
	+\vecf{v}_\delta\cdot\vecf{\nabla}T_s=
	\frac{T_s}{\rho_0 T_0 C_p}\left(k\Delta T_\delta+q_\delta \right)
	+\frac{T_\delta}{\rho_0 T_0 C_p}\left(k\Delta T_s+q_s \right) 
	+{\mathcal F}_T\left(\vecf{v}_\delta,T_\delta, q_\delta\right) \label{T3}  
\end{gather}
\end{subequations}
Together with the following quadratic terms toward Equation (\ref{V3}) and (\ref{T3}), respectively:
\begin{subequations} \label{SUT}
\begin{gather}
\vecf{\mathcal F}_{\vecf{v}}\left(\vecf{v}_\delta,T_\delta,p_\delta,\vecf{f}_\delta\right)=
	-\vecf{v}_\delta\cdot\vecf{\nabla}\vecf{v}_\delta
	+\mu\frac{T_\delta}{\rho_0 T_0}\Delta\vecf{v}_\delta
	+\frac{\mu}{3}\frac{T_\delta}{\rho_0 T_0}\vecf\nabla\left(\vecf\nabla\cdot\vecf{v}_\delta\right)
	-\frac{T_\delta}{\rho_0 T_0}\vecf{\nabla}p_\delta
	+\frac{T_\delta}{\rho_0 T_0}\vecf{f}_\delta \label{Su}\\
{\mathcal F}_T\left(\vecf{v}_\delta,T_\delta, q_\delta\right)=
	-\vecf{v}_\delta\cdot\vecf{\nabla}T_\delta
	+\frac{k}{\rho_0 T_0 C_p}T_\delta\Delta T_\delta
	+\frac{T_\delta}{\rho_0 T_0 C_p}q_\delta \label{ST}
\end{gather}
\end{subequations}

\begin{figure}[H]
\begin{center}
\includegraphics[width=0.9\textwidth]{ 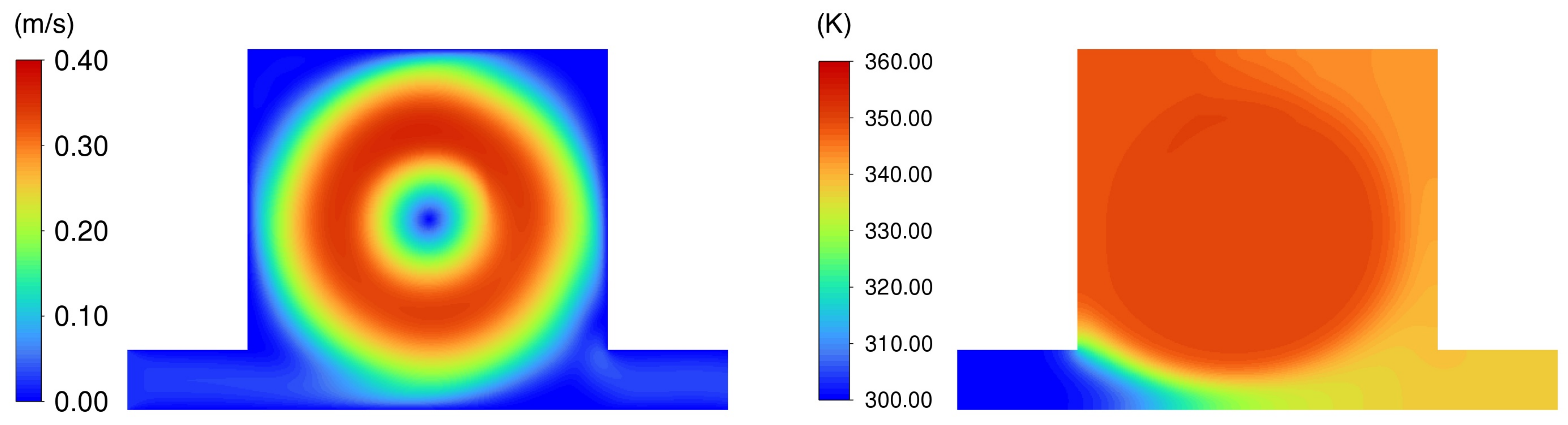}
\caption{Steady-state solution of velocity magnitude and temperature at the operating-point.}
\label{fig:E2fss}
\end{center}
\end{figure}

The deviation system is discretized using the FVM, with the Green-Gauss cell-based method and the second-order central differencing scheme. The momentum and energy equations are discretized into differential equations, while the continuity equation forms the singular algebraic part of the descriptor system. Since $(\vecf{v}, T, p)$ and $(\vecf{v}_s, T_s, p_s)$ satisfy the same boundary conditions, the deviation variables $(\vecf{v}_\delta, T_\delta, p_\delta)$ satisfy homogeneous boundary conditions, which is, either their values or their normal gradients vanish on the corresponding boundaries. Consequently, the non-singular algebraic equations from the boundary conditions vanish. The discretized system can be written in the form of Equation (\ref{DAE}):
\begin{subequations}\label{DCVT}
\begin{gather}
 \dot{\vect x_1}=\vect A_{11}\vect x_1+\vect A_{12}\vect x_2
  +\vect B_1\vect u
  +\tensor H_{11}\left(\vect x_1, \vect x_1\right)
  +\tensor H_{12}\left(\vect x_1, \vect x_2\right)
  +\tensor N_1\left(\vect x_1\right)\vect u\\
 \vect 0=\vect A_{21}\vect x_1+\vect B_2\vect u
\end{gather}
\end{subequations}
where $\vect A_{12}$ and $\vect A_{21}$ are the finite volume discretization of the differential operators: 
\begin{gather*}
 \left[-\frac{1}{\rho_s}\vecf{\nabla}; 0\right],\quad
 \left[\vecf{\nabla}\cdot,-\frac{k}{\rho_0 C_p T_0}\Delta\right] 
\end{gather*}
Consequently, $\vect A_{21}\vect A_{12}$ is a discrete counterpart of the scaled negative Laplacian operator $-\Delta/{\rho_s}$, with homogeneous Dirichlet boundary conditions at the outlet and the corresponding homogeneous Neumann boundary conditions on the remaining boundaries. Thus, it is non-singular, and therefore Assumption \ref{as2} is satisfied. Furthermore, $\vect \Pi$ is the discrete counterpart of $(\vecf I-\vecf{\nabla}\Delta^{-1}\vecf{\nabla}\cdot)\oplus\vecf0$. By Proposition \ref{adjoint}, $\vect \Pi$ is self-adjoint with respect to the weighted inner product $\vect G$, where $\vect G$ is the diagonal matrix whose entries are the corresponding cell volumes.
 
$\vect \Gamma_2$ and $\vect \Gamma_3$ can be obtained by solving the corresponding Poisson equations, and $\vect \Gamma_1$ can be computed by $\vect \Gamma_3$. The Krylov subspaces are then constructed, and the ROM matrices can be calculated following the derivation in this study. 
In the numerical implementation, the equations in Steps 2--4 in Algorithm \ref{alg1} are not directly solved. Instead, based on the analysis in this section, Equation~(\ref{VCT3}), with the second-order terms ${\mathcal F}_T$ and $\vecf{\mathcal F}_{\vecf{v}}$ removed, can be written as: 
\begin{subequations}
\begin{gather}
\frac{\partial \rho}{\partial t}
	+\vecf{\nabla} \cdot \left(\rho \vecf{v}\right)
	=\frac{\rho}{T} {\mathcal F}_T\left(\vecf{v}_\delta,T_\delta, q_\delta\right)\\
\frac{\partial\rho\vecf{v}}{\partial t}
	+\vecf{\nabla}\cdot\left(\rho\vecf{v}\vecf{v}\right)
	=-\vecf{\nabla}p+\mu\Delta\vecf{v}
	+\frac{\mu}{3}\vecf\nabla\left(\vecf\nabla\cdot\vecf{v}\right)
	+\rho\vecf{g}+\vecf{f}
	-\rho \vecf{\mathcal F}_{\vecf{v}}\left(\vecf{v}_\delta,T_\delta,p_\delta,\vecf{f}_\delta\right)\\
\frac{\partial\rho C_p T}{\partial t}
	+\vecf{\nabla}\cdot\left(\rho C_p \vecf{v}T\right)
	=k \Delta T+q
\end{gather}
\end{subequations}
The equations in Steps 2--4 of Algorithm \ref{alg1} can likewise be recast into these forms, which are naturally suited to finite-volume discretization. These equations are solved in ANSYS Fluent using the Green-Gauss cell-based method and the second-order central differencing scheme.
Since the underlying governing equations are equivalent at the continuous level and are discretized using second-order schemes, the resulting linearized discrete equations differ from the linear part of Equation~(\ref{DCVT}) by $O(h^2)$, where $h$ denotes the mesh size. Thus, the solutions of these equations can be used to construct the Krylov subspaces when grid independence is achieved.

The solutions used to construct the Krylov subspaces were obtained on a uniform mesh with an element size of $0.1~\mathrm{mm}$, and the transient responses of the resulting ROM were compared with numerical solutions of Equation~(\ref{VCT1}) computed on the same mesh.
The mesh consists of $100,000$ grid cells, resulting in $400,000$ degrees of freedom corresponding to two velocity components, one temperature component and one pressure component in each cell.
In this example, four interpolation points, $0$, $0.01$, $0.1$, and $1$, were used to construct the ROM, resulting in a 28th-order ROM. Compared with the previous example, the higher ROM order in this example is due to the additional Krylov subspace associated with the time derivative of the energy source. Figure \ref{fig:E2f2} shows the input variations used to test the ROM transient response. Three cases were considered in this example: (1) variation of the momentum source, $\vecf{f}_\delta$, only; (2) variation of the energy source, $q_\delta$, only; and (3) simultaneous variations of both the momentum and energy sources.

\begin{figure}[H]
\begin{center}
\includegraphics[width=0.6\textwidth]{ 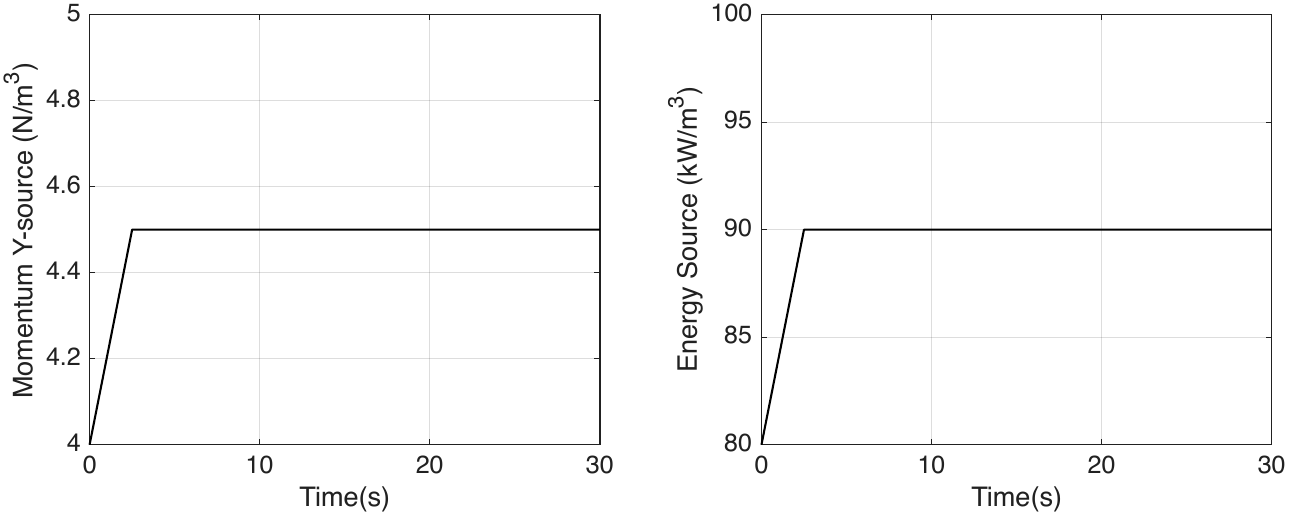}
\caption{Momentum and energy source variations applied to the systems.}
\label{fig:E2f2}
\end{center}
\end{figure}

Figure \ref{fig:E2f3} shows the relative $L^2$ error as a function of time of the three cases, which is defined as the following: 
\begin{gather}
	e_{\vecf v}\left(t\right)=\frac{\left\|\hat{\vecf v}_\delta\left(t\right)-\vecf v_\delta\left(t\right)\right\|_{L^2}}{\left\|\vecf v_\delta\left(t\right)\right\|_{L^2}},\quad
	e_{T}(t)=\frac{\bigl\|\hat{T}_\delta\left(t\right)-T_\delta\left(t\right)\bigr\|_{L^2}}{\left\|T_\delta\left(t\right)\right\|_{L^2}}
\end{gather}
where $\hat{\vecf v}_\delta$ and $\hat{T}_\delta$ denote $\vecf v_\delta$ and $T_\delta$ predicted by the ROM, respectively. The velocity relative $L^2$ errors remain small in all three cases, although the error is slightly higher in case 2 because the velocity variation, induced solely by the temperature change, is much smaller in magnitude, making the relative error more sensitive to small absolute differences. For temperature relative $L^2$ error, ROM gives a good result on all three cases, where the relative $L^2$ error decreases rapidly to below $1\%$ fast and drops to less than $0.5\%$ as the solution approaches the steady state. 

\begin{figure}[H]
    \centering
    \begin{subfigure}[t]{0.36\textwidth}
        \centering
        \includegraphics[width=\linewidth]{ 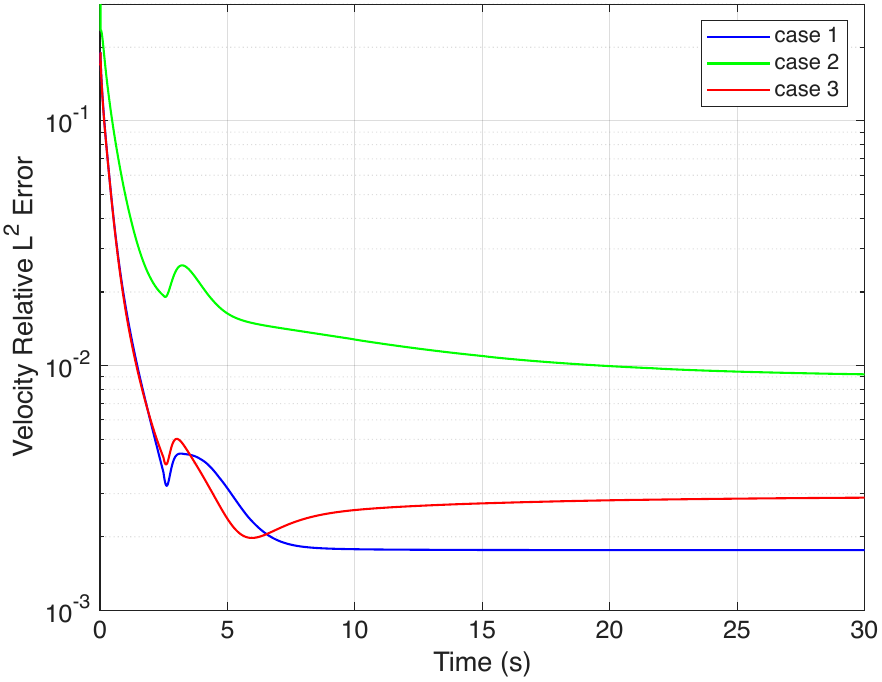}
        %\caption{Velocity relative $L^2$ error}
        \label{fig:E2f3_1}
    \end{subfigure}
    \hspace{0.02\textwidth}
    \begin{subfigure}[t]{0.36\textwidth}
        \centering
        \includegraphics[width=\linewidth]{ 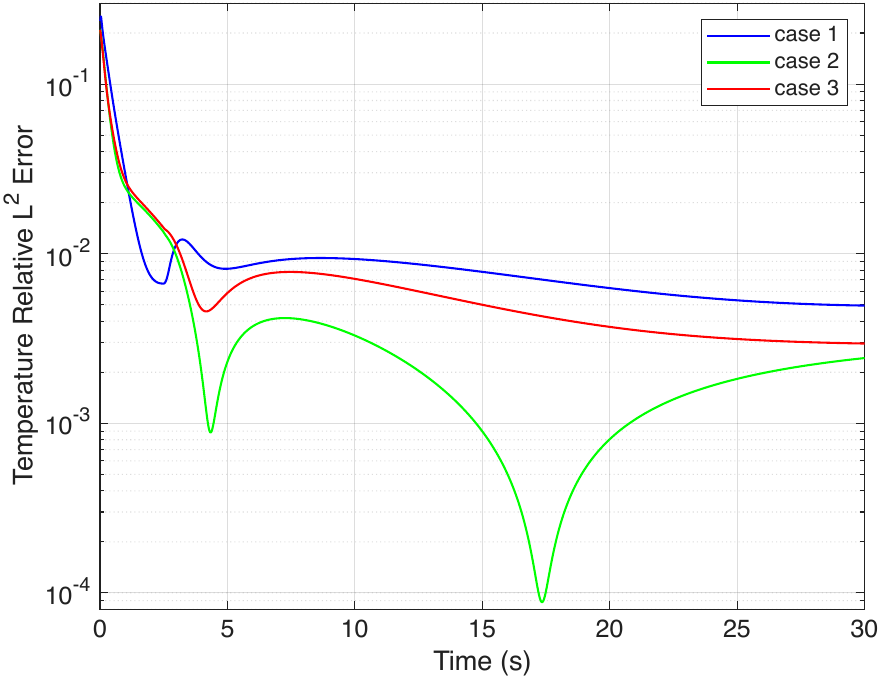}
        %\caption{Temperature relative $L^2$ error}
        \label{fig:E2f3_2}
    \end{subfigure}
    \caption{Relative $L^2$ error as a function of time for three cases: (1) variation of the momentum source, $\vecf{f}_\delta$, only; (2) variation of the energy source, $q_\delta$, only; and (3) simultaneous variations of both the momentum and energy sources.}
    \label{fig:E2f3}
\end{figure}

The transient response of $\hat{\vecf v}_\delta$ and $\hat{T}_\delta$ at the six monitoring points labeled $a$ to $f$ in Figure \ref{fig:E2f1} are compared with the corresponding FOM solutions, $\vecf v_\delta$ and $T_\delta$, for the three test cases, as shown in Figure \ref{fig:E2f4}. The ROM accurately predicts the transient responses at all monitoring points, with the ROM and FOM results almost indistinguishable throughout the simulations. 

Figure \ref{fig:E2f5} further compares the final steady-state distributions of $\hat{\vecf v}_\delta$ and $\hat{T}_\delta$ with those of $\vecf v_\delta$ and $T_\delta$, together with the corresponding absolute error fields, for the case with simultaneous variations of both the momentum and energy sources. The ROM predictions agree well with the FOM results from the figure.

\begin{figure}[H]
\begin{center}
\includegraphics[width=0.95\textwidth]{ 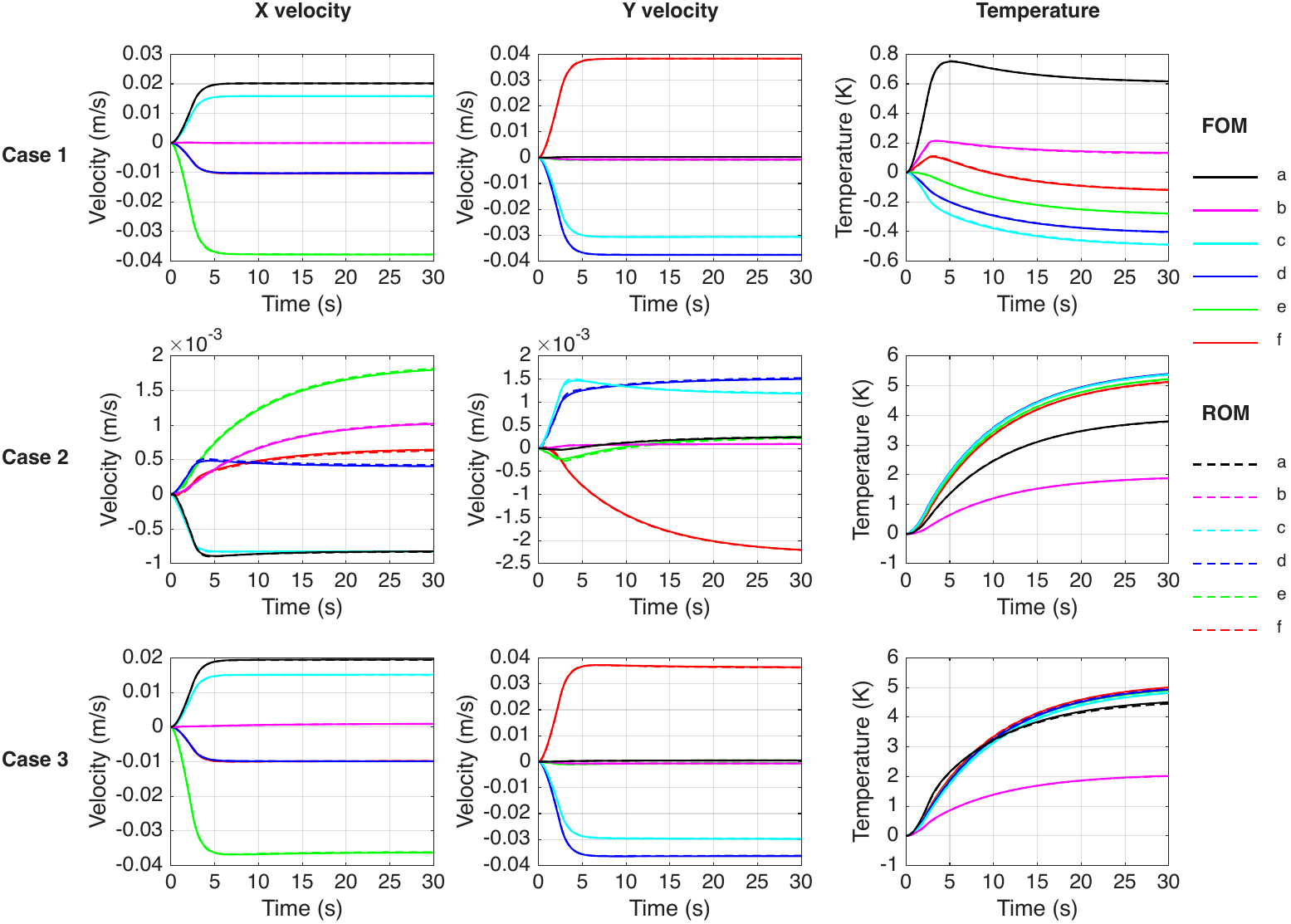}
\caption{Transient responses of $\hat{\vecf v}_\delta$ and $\hat{T}_\delta$ at the six monitoring points compared with the corresponding FOM solutions, $\vecf v_\delta$ and $T_\delta$, for three test cases: (1) variation of the momentum source, $\vecf{f}_\delta$, only; (2) variation of the energy source, $q_\delta$, only; and (3) simultaneous variations of both the momentum and energy sources.}
\label{fig:E2f4}
\end{center}
\end{figure}

\begin{figure}[H]
\begin{center}
\includegraphics[width=0.95\textwidth]{ 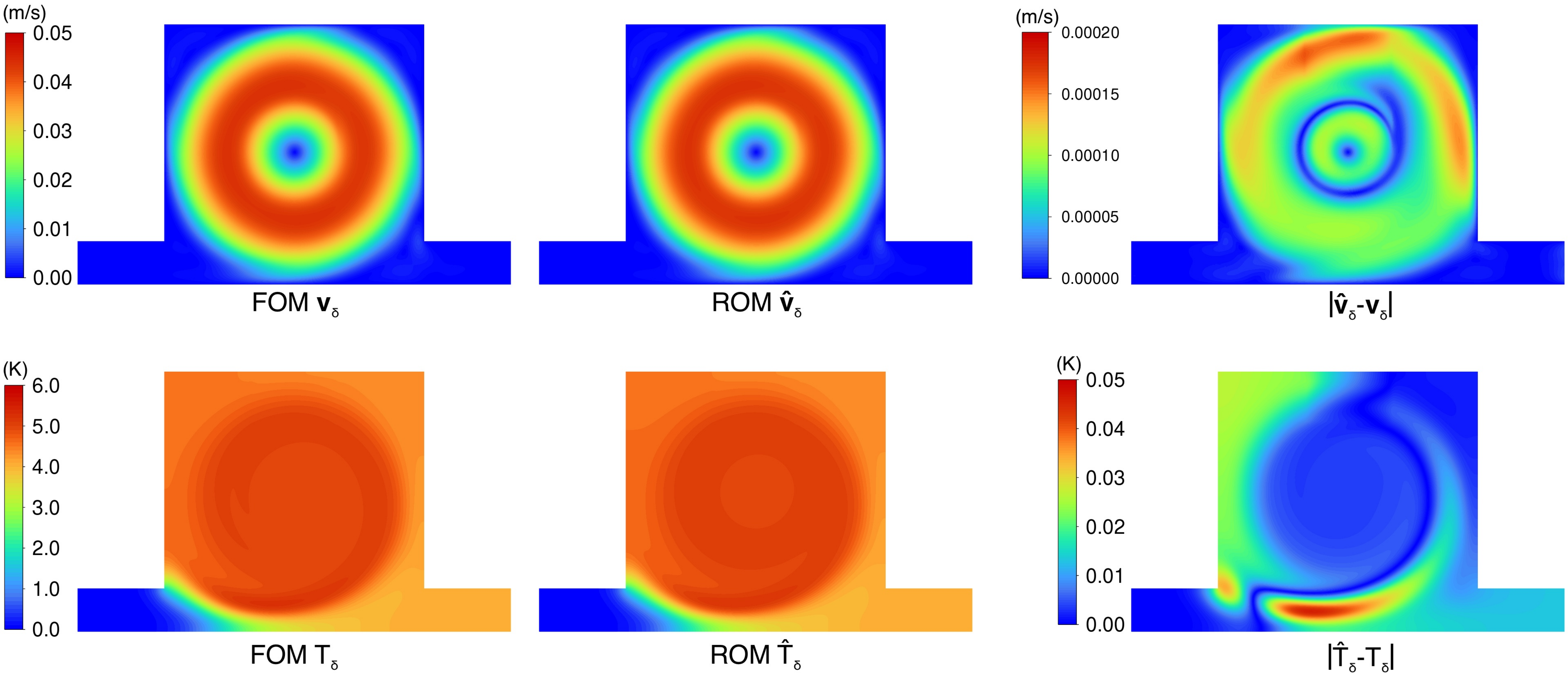}
\caption{Comparison of the steady-state distributions of $\hat{\vecf v}_\delta$ and $\hat{T}_\delta$ with the corresponding FOM solutions, $\vecf v_\delta$ and $T_\delta$, together with the corresponding absolute error fields, for the case with simultaneous variations of both the momentum and energy sources.}
\label{fig:E2f5}
\end{center}
\end{figure}

\section{Conclusion}

This paper presents a novel moment-matching method for building ROMs of multi-input quadratic-bilinear descriptor systems. The proposed method of deriving a set of compact models allows building a numerically stable ROM for a QB system while matching the same transfer functions as the traditional Krylov-based construction based on a single large model. 
The procedure of ROM derivation is completely matrix-free, avoiding the need for explicit construction and storage of the full-order system matrices and tensors. This feature is particularly important for large-scale CFD problems, for which explicit construction and storage of the full-order matrices and tensors is intractable. The ROM in this study is successfully implemented and evaluated for QB systems with multiple inputs, including those acting on the algebraic equations. The results of numerical experiments conducted for multiple time-varying inputs demonstrate that the transient responses of the ROMs agree well with those of the FOMs. 
It should be noted that, despite the relatively small scales of two-dimensional examples used in this study, the proposed method will be equally applicable to large-scale three-dimensional models. Future work will focus on applying the method to three-dimensional turbulent flow systems and evaluating their performance.

\section*{Acknowledgments}
This work is supported by the Cyber-Physical Systems program, project award No.~2024-67021-41626, from the U.S. Department of Agriculture's National Institute of Food and Agriculture.

\appendix
\appendixpage
\section{Transformation of a General QB Descriptor System}\label{DS}

\renewcommand{\theequation}{\thesection.\arabic{equation}}
\setcounter{equation}{0}

Consider the rank normal form of $\vect A_{22}$ in Equation (\ref{QBeq}):
\begin{gather}
\vect A_{22}=\vect P
\begin{bmatrix}
	\vect I_{r}	& \vect 0\\
	\vect 0		& \vect 0
\end{bmatrix}\vect Q
\end{gather}
Where $\vect P$ and $\vect Q$ are non-singular matrices. Let $\vect y=[\vect y_2,\vect y_3]^T=\vect Q\vect x_2$, where $\vect y_2$ has a dimension $r$, , corresponding to the identity block. The algebraic equation becomes:
\begin{gather}
\vect 0=\vect P^{-1}\vect A_{21}\vect x_1+\begin{bmatrix}
	\vect I_{r}	&\vect 0\\
	\vect 0		&\vect 0
\end{bmatrix}
\begin{bmatrix}
	\vect y_2\\
	\vect y_3
\end{bmatrix}+\vect P^{-1}\vect B_2\vect u
\end{gather}
Let $\hat{\vect A}_{21}$ denote the first $r$ rows of $\vect P^{-1}\vect A_{21}$, and let $\hat{\vect A}_{31}$ denote the remaining rows, and let $\hat {\vect B}_{2}$ denote the first $r$ rows of $\vect P^{-1}{\vect B}_2$, and let $\hat {\vect B}_{3}$ denote the remaining rows:
\begin{subequations}
\begin{gather}
\vect 0=\hat {\vect A}_{21}\vect x_1+\vect y_2+\hat {\vect B}_{2}\vect u	 \label{AEinv}\\
\vect 0=\hat {\vect A}_{31}\vect x_1+\hat {\vect B}_{3}\vect u	
\end{gather}
\end{subequations}
Then, the $\vect x_2$ can be expressed as, using Equation (\ref{AEinv}):
\begin{gather}
	\vect x_2=\vect Q^{-1}
	\begin{bmatrix}
	-\hat {\vect A}_{21}\vect x_1-\hat {\vect B}_{2}\vect u	\\
	\vect y_3		
\end{bmatrix}
\end{gather}
Denote the first $r$ column of the $\vect A_{12}\vect Q^{-1}$ as $\hat {\vect A}_{12}$, and the rest parts as $\hat {\vect A}_{13}$, and define the following operators:
\begin{subequations}
\begin{gather}
\hat {\vect A}_{11}=\vect A_{11}-\hat {\vect A}_{12}\hat {\vect A}_{21},\quad
\hat {\vect B}_{1}=\vect B_1-\hat{\vect A}_{12}\hat{\vect B}_{2},\\
\hat {\tensor H}_{11}\left(\vect x_1, \vect x_1\right)=
	\tensor H_{11}\left(\vect x_1, \vect x_1\right)
	+\tensor H_{12}\left(\vect x_1, \vect Q^{-1}
	\begin{bmatrix}
	-\hat {\vect A}_{21}\vect x_1	\\
	\vect 0		
	\end{bmatrix}\right),\\
\hat {\tensor H}_{13}\left(\vect x_1, \vect y_3\right)=
	\tensor H_{12}\left(\vect x_1, \vect Q^{-1}
	\begin{bmatrix}
	\vect 0	\\
	\vect y_3		
	\end{bmatrix}\right),\\
\hat {\tensor N}_1\left(\vect x_1\right)\vect u=
	\tensor N_1\left(\vect x_1\right)\vect u+
	\tensor H_{12}\left(\vect x_1, \vect Q^{-1}
	\begin{bmatrix}
	-\hat {\vect B}_{2}\vect u	\\
	\vect 0		
	\end{bmatrix}\right)+\tensor N_2\left(\vect Q^{-1}
	\begin{bmatrix}
	-\hat {\vect A}_{21}\vect x_1	\\
	\vect 0		
	\end{bmatrix}
	\right)\vect u,\end{gather}\begin{gather}
\hat {\tensor N}_3\left(\vect y_3\right)\vect u=
	\tensor N_2\left(\vect Q^{-1}
	\begin{bmatrix}
	\vect 0	\\
	\vect y_3		
	\end{bmatrix}
	\right)\vect u,\quad
\hat {\tensor D}_1\left(\vect u, \vect u\right)	=
\tensor N_2\left(\vect Q^{-1}
	\begin{bmatrix}
	\vect -\hat {\vect B}_{2}\vect u	\\
	\vect 0		
	\end{bmatrix}
	\right)\vect u
\end{gather}
\end{subequations}
Then, the quadratic-bilinear descriptor system becomes the following DAEs:
\begin{subequations}
\begin{gather}
 \dot{\vect x_1}=\hat {\vect A}_{11}\vect x_1
  +\hat{\vect A}_{13}\vect y_3+\hat {\vect B}_{1}\vect u
  +\hat {\tensor H}_{11}\left(\vect x_1, \vect x_1\right)
  +\hat {\tensor H}_{13}\left(\vect x_1, \vect y_3\right)
  +\hat {\tensor N}_1\left(\vect x_1\right)\vect u
  +\hat {\tensor N}_3\left(\vect y_3\right)\vect u
  +\hat {\tensor D}_1\left(\vect u, \vect u\right)\\
 \vect 0=\hat {\vect A}_{31}\vect x_1 +\hat{\vect B}_3\vect u 
 \end{gather}
\end{subequations}

\bibliographystyle{IEEEtranN}
\bibliography{Reference.bib}
\end{document}